\def\ds{\displaystyle}
\def\={\buildrel \triangle \over =}
\def\ns{\noalign{\ss} }
\def\ms{\medskip}
\def\q{\quad}
\def\qq{\qquad}
\def\3n{\negthinspace \negthinspace \negthinspace }
\def\2n{\negthinspace \negthinspace }
\def\1n{\negthinspace }
\def\|{||}
\def\({\Big (}
\def\){\Big )}
\def\[{\Big[}
\def\]{\Big]}
\def\be{\begin{equation}}
\def\bel{\begin{equation}\label}
\def\ee{\end{equation}}
\def\bt{\begin{theorem}}
\def\bcd{\begin{condition}}
\def\ecd{\end{condition}}
\def\et{\end{theorem}}
\def\bc{\begin{corollary}}
\def\ec{\end{corollary}}
\def\bde{\begin{definition}}
\def\ede{\end{definition}}
\def\bl{\begin{lemma}}
\def\el{\end{lemma}}
\def\bp{\begin{proposition}}
\def\ep{\end{proposition}}
\def\br{\begin{remark}}
\def\er{\end{remark}}
\def\ba{\begin{array}}
\def\ea{\end{array}}
\def\ed{\end{document}}
\def\ns{\noalign{\ms}}
\def\ds{\displaystyle}
\newtheorem{lemma}{Lemma}[section]
\newtheorem{remark}{Remark}[section]
\newtheorem{theorem}{Theorem}[section]
\newtheorem{corollary}{Corollary}[section]
\newtheorem{definition}{Definition}[section]
\newtheorem{proposition}{Proposition}[section]
\newtheorem{condition}{Condition}[section]
\title{\bf Conditional Stability of Coefficients Inverse
Problem for Strongly Coupled Schr\"{o}dinger Equations}
\author{Xiaomin Zhu\thanks{School of Mathematical Sciences, University of Electronic Science and Technology of China}   \ and Fangfang Dou\thanks{Corresponding author. School of Mathematical Sciences, University of Electronic Science
and Technology of China. E-mail: fangfdou@uestc.edu.cn.  }}
\date{}                            % Activate to display a given date or no date
\begin{document}

\maketitle
%\section{}
%\subsection{}
\begin{abstract}
This paper concerns inverse problems for strongly coupled $\textrm{Schr\"{o}dinger} $ equations. The purpose of this inverse problem is to  retrieve a stationary potential in the strongly coupled $\textrm{Schr\"{o}dinger} $ equations from either boundary or internal measurements. Two stability results are derived from a new Carleman estimate for the strongly coupled $\textrm{Schr\"{o}dinger} $ equations.
\end{abstract}

\noindent {\bf Keywords:} Carleman estimate, inverse problem, strongly coupled Schr\"{o}dinger equations

\section{Introduction}
Let $T>0$, $i=\sqrt{-1}$ and $\Omega\subset\mathbb{R}^N$ be a bounded domain with smooth boundary $\Gamma$, and $\nu$ denotes the unit outward normal vector to $\Gamma$. The system of strongly coupled  $\textrm{Schr\"{o}dinger} $ equations
\begin{equation} \label{a1}
  \left\{
   \begin{array}{ll}
\ds i\partial_t y_1+a_{11}\Delta y_1+a_{12}\Delta y_2+a(x)y_1+b(x)y_2=f_1,\quad &\textrm{in}\ \Omega\times(0,T),  \\
\ns\ds i\partial_t y_2+a_{21}\Delta y_1+a_{22}\Delta y_2+c(x)y_1+d(x)y_2=f_2,\quad   &\textrm{in}\ \Omega\times(0,T),   \\
\ns\ds y_1(x,t)=g_1(x,t),y_2(x,t)=g_2(x,t),\quad  &\textrm{on}\ \Gamma\times(0,T), \\
\ns\ds y_1(x,0)=y_{10}, y_2(x,0)=y_{20}, \quad&\textrm{in}\ \Omega.
   \end{array}
   \right.
  \end{equation}
can be used to describe the motion of varies microscopic particles under strong perturbations. Coefficient identification of this system has been concerned by many research fields. In case of consideration for describing molecular multiphoton transitions induced by a laser, $a(x)$ and $d(x)$ in \eqref{a1} indicating the field-free molecular electronic potential, and $b(x)$ and $c(x)$ indicating the radiation-molecule interaction\cite{BA1993}. However, limited by the uncertainty principle, it is difficult to directly measure the field-free molecular electronic potential in the experiment. Thus, the problem of identifying the coefficients $a(x)$ and $d(x)$ in system \eqref{a1} is proposed in the real applications.

 Our purpose in this paper is retrieving a stationary potential in the strongly coupled $\textrm{Schr\"{o}dinger} $ equations from either boundary or internal measurements, which can be stated as follows.

\noindent{\bf Inverse Problem I (internal observations).}  Let $\omega$ be any given open subdomain of $\Omega$ and set $\omega_T:=\omega\times (0,T)$, our goal is to retrieve the potential $a(x)$ in the strongly coupled $\textrm{Schr\"{o}dinger} $ equations from the observation data
$$(y_1,y_2)|_{\omega_T}.$$
{\bf Inverse Problem II (boundary observations).}  Let $\Gamma^+$ be an open set of $\Gamma$, under some geometrically conditions on $\Gamma^+$, and set $\Gamma_T^+:=\Gamma^+\times (0,T)$, our task is to retrieve the potential $a(x)$ in the strongly coupled $\textrm{Schr\"{o}dinger} $ equations from the measurement of the normal derivative
$$(\frac{\partial y_1}{\partial\nu},\frac{\partial y_2}{\partial\nu})\big|_{\Gamma^+_T}.$$

The main tool we used in this problem is a new Carleman estimate for strongly coupled $\textrm{Schr\"{o}dinger} $ equations. Carleman estimate which is introduced by Carleman\cite{CT1939} is a weighted energy estimate. Bukhgeim and Klibanov introduced the Carleman estimate into the field of inverse problems in the paper of \cite{BK1981}(see also\cite{KB1992} and \cite{KB2013}). The Carleman estimate of $\textrm{Schr\"{o}dinger} $ equations were studied extensively(e.g.\cite{BM2009} \cite{LB2009} \cite{BM2008}). Isakov\cite{VI1998} and Tataru\cite{TD1997} proved local Carleman estimates for $\textrm{Schr\"{o}dinger} $ equation under a strong  pseudoconvexity condition while Baudouin and Puel\cite{BP2007} established a global Carleman estimate for the evolution $\textrm{Schr\"{o}dinger} $ equation under a strict pseudoconvexity condition and use it to derive the stability for some inverse problems. The idea of relaxing the pseudoconvexity condition into a weak one was proposed in \cite{KJ2007}, after that, Osses et al.\cite{O2008} established some new Carleman inequalities under a relaxed pseudoconvexity condition and applied them to obtain the stability of the inverse problem for the evolution $\textrm{Schr\"{o}dinger} $ equation. For the case of coupled systems, Benabdallah et al.\cite{BC2009}, Cristofoletal et al.\cite{CP2006} and Dou and Yamamoto\cite{DY2019} derived the Carleman estimate for some weakly coupled systems by adding the Carleman estimate corresponding to each equation. Liu and Triggiani\cite{LT2011} proved the uniqueness theorem in determining electric potentials for a system of coupled $\textrm{Schr\"{o}dinger} $ equations. However, since the strongly coupled terms cannot be absorbed by adding up each Carleman estimate at the same time, for strongly coupled systems, the method above doesn't work. In this case, if the coefficient matrix of the strongly coupled parabolic or hyperbolic system is diagonalizable, Bellassoued and Yamamoto\cite{BY2013} obtained the Carleman estimate of the strongly coupled system by diagonalizing the coefficient matrix. And if the coefficient matrix of the strongly coupled reaction-diffusion system is nonsingular, Wu and Yu\cite{WY2018} established a Carleman estimate and applied it to obtain the stability of coefficient inverse problem. To the best of our knowledge, there is no stability results for strongly coupled $\textrm{Schr\"{o}dinger} $ equations.

In this paper, we derive a new Carleman estimate for the strongly coupled $\textrm{Schr\"{o}dinger} $ equations. By converting the strongly coupled terms to the derivatives of $y_1$ and $y_2$ with respect to time, we establish the Carleman estimate for the system \eqref{a1}. For IP (I) and (II), the proof of the stability is based on the Carleman estimate for the strongly coupled $\textrm{Schr\"{o}dinger} $ equations.

This paper is organized as follows. In section 2, we derive a global Carleman estimate for internal measurements in $\omega\subset\Omega$, then we applied the Carleman inequality to obtain the stability of our inverse problem. Section 3 gives the same analysis for boundary observations on $\Gamma^+\subset\Gamma$.

 \section{ IP (I): Internal Observations}
 \subsection{Carleman estimate}
  Let $\omega$ be any given open subdomain of $\Omega$, we set $Q_T:=\Omega\times(0,T)$, $\Gamma_T:=\Gamma\times(0,T)$ and $\omega_T:=\omega\times(0,T)$. Suppose $\psi(x)\in C^4(\overline{\Omega})$ is a weight function which satisfied the following properties:
 \begin{itemize}
      \item  $\nabla\psi\neq0,\quad\quad \textrm{in}\ \overline{\Omega\setminus\omega}$.
      \item  $\frac{\partial\psi}{\partial \nu}\leq 0,\qquad \ \textrm{on}\ \Gamma$.
      \item  There exist a constant $\mu>0$ such that $\forall x\in \overline{\Omega\backslash\omega}$ and $\forall \xi=(\xi_1,...,\xi_N)\in\mathbb{R}^N$,
      \begin{equation}
      |\nabla\psi\cdot\xi|^2\geq\mu|\xi|^2.
      \end{equation}
      \item  $\psi(x)>\frac{3}{4}\|\psi\|_{L^\infty(\Omega)},  \quad\forall x\in\Omega$.
 \end{itemize}
Set $C_\psi=\frac{3}{2}\|\psi\|_{L^\infty(\Omega)}$ and let
\begin{equation}
\theta(x,t)=\frac{e^{\lambda\psi(x)}}{t(T-t)},\ \varphi(x,t)=\frac{e^{\lambda C_\psi}-e^{\lambda\psi(x)}}{t(T-t)},\qq\forall (x,t)\in Q_T,
\end{equation}
with a positive parameter $\lambda$.

Firstly, we consider the following system of strongly coupled  $\textrm{Schr\"{o}dinger} $ equations:
 \begin{equation} \label{b1}
  \left\{
  \begin{array}{ll}
\ds i\partial_t y_1+a_{11}(x)\Delta y_1+a_{12}(x)\Delta y_2=f_1,\quad &\textrm{in}\ \Omega\times(0,T),  \\
\ns\ds i\partial_t y_2+a_{21}(x)\Delta y_1+a_{22}(x)\Delta y_2=f_2,\quad   &\textrm{in}\ \Omega\times(0,T),    \\
\ns\ds y_1(x,t)=0,y_2(x,t)=0,\quad  &\textrm{on}\ \Gamma\times(0,T), \\
\ns\ds y_1(x,0)=y_{10}, y_2(x,0)=y_{20}, \quad&\textrm{in}\ \Omega.
   \end{array}
   \right.
\end{equation}
 Assuming that $a_{ij}\in C^2(\overline{\Omega})$ such that
\begin{equation} \label{ap}
 a_{12}(x)a_{21}(x)>0,\ \textrm{det}(a_{ij})\neq0,\ a_{22}(x)\cdot\textrm{det}(a_{ij})>0,\qq x\in\overline{\Omega}.
 \end{equation}
Then we have the following result.

\bp\label{A}
Suppose there exist $\psi$, $\varphi$ and $\theta$ satisfy the above conditions, let $f_1$, $f_2\in L^2(\overline{Q_T})$, and $(y_1,y_2)\in [H^1(0,T;L^2(\Omega))\cap L^2(0,T;H^2(\Omega))]^2$ is the solution of \eqref{b1}. Then there exist constants $s_0\geq1$, $\lambda_0\geq1$ and $C>0$ such that for all $s>s_0$, $\lambda>\lambda_0$, the next inequality holds:
\begin{align}\label{e5}\nonumber
&\q  \int_{Q_T}\left(|\tilde{M}_{11}(y_1 ,y_2)|^2+| \tilde{M}_{12}(y_1 ,y_2)|^2+| \tilde{M}_{21}(y_1 ,y_2)|^2+| \tilde{M}_{22}(y_1 ,y_2)|^2\right)e^{-2s\varphi}dxdt\\\nonumber
&\q  +\int_{Q_T}\left[s^3\lambda^4\theta^3\big(|y_1|^2+|y_2|^2\big)+s\lambda^2\theta\big(|\nabla y_1|^2+|\nabla y_2|^2\big)\right]e^{-2s\varphi}dxdt\\\nonumber
&\q  +\int_{\Gamma_T}s\lambda\theta\left|\frac{\partial\psi}{\partial \nu}\right|\left(\left|\frac{\partial y_1}{\partial \nu}\right|^2+\left|\frac{\partial y_2}{\partial \nu}\right|^2\right)e^{-2s\varphi}dSdt\\\nonumber
&\leq C\int_{Q_T}\left(|f_1|^2+|f_2|^2\right)e^{-2s\varphi}dxdt + C\int_{\omega_T}s^3\lambda^4\theta^3\big(|y_1|^2+|y_2|^2\big)e^{-2s\varphi}dxdt\\
&\q+ C\int_{\omega_T}s\lambda^2\theta\big(|\nabla y_1|^2+|\nabla y_2|^2\big)e^{-2s\varphi}dxdt,
\end{align}
where $\tilde{M}_{11}$, $\tilde{M}_{12}$, $\tilde{M}_{21}$ and $\tilde{M}_{22}$ denote the following operators
\begin{equation}\label{h1}
\left\{
\begin{aligned}
\tilde{M}_{11}(y_1 ,y_2)&=2s\nabla\varphi\cdot\nabla y_1+(isb_{11}\varphi_t+s\Delta\varphi-2s^2|\nabla\varphi|^2)y_1+isb_{12}\varphi_ty_2,\\
\tilde{M}_{12}(y_1 ,y_2)&=(2s^2|\nabla\varphi|^2-isb_{11}\varphi_t-s\Delta\varphi)y_1-isb_{12}\varphi_ty_2+ib_{11}\partial_ty_1+ib_{12}\partial_ty_2\q\\
&\q-2s\nabla\varphi\cdot\nabla y_1+\Delta y_1,
\end{aligned}
   \right.
\end{equation}
\begin{equation}\label{h2}
\left\{
\begin{aligned}
\tilde{M}_{21}(y_1 ,y_2)&=2s\nabla\varphi\cdot\nabla y_2+(isb_{22}\varphi_t+s\Delta\varphi-2s^2|\nabla\varphi|^2)y_2+isb_{21}\varphi_ty_1,\\
\tilde{M}_{22}(y_1 ,y_2)&=-isb_{21}\varphi_ty_1+(2s^2|\nabla\varphi|^2-isb_{22}\varphi_t-s\Delta\varphi)y_2+ib_{21}\partial_ty_1+ib_{22}\partial_ty_2\\
&\q-2s\nabla\varphi\cdot\nabla y_2+\Delta y_2.
\end{aligned}
   \right.
\end{equation}
\ep
\begin{proof}
Since $\textrm{det}(a_{ij})\neq0$, the system \eqref{b1} can be rewritten as
 \begin{equation}\label{b2}
\left\{
\begin{array}{ll}
\ds ib_{11}(x)\partial_t y_1+ib_{12}(x)\partial_t y_2+\Delta y_1=F_1,\quad &\textrm{in}\ \Omega\times(0,T),  \\
\ns\ds ib_{21}(x)\partial_t y_1+ib_{22}(x)\partial_t y_2+\Delta y_2=F_2,\quad &\textrm{in}\ \Omega\times(0,T),   \\
\ns\ds y_1(x,t)=0,y_2(x,t)=0,\quad  &\textrm{on}\ \Gamma\times(0,T), \\
\ns\ds y_1(x,0)=y_{10}, y_2(x,0)=y_{20}, \quad&\textrm{in}\ \Omega.
 \end{array}
   \right.
\end{equation}
where
\begin{equation}\label{b3}
b_{11}=\frac{a_{22}}{\textrm{det}(a_{ij})},\ b_{12}=-\frac{a_{12}}{\textrm{det}(a_{ij})},\ b_{21}=-\frac{a_{21}}{\textrm{det}(a_{ij})},\ b_{22}=\frac{a_{11}}{\textrm{det}(a_{ij})}.
\end{equation}
\begin{equation}\label{b4}
F_1=\frac{a_{22}f_1-a_{12}f_2}{\textrm{det}(a_{ij})},\quad F_2=\frac{a_{11}f_2-a_{21}f_1}{\textrm{det}(a_{ij})}.
\end{equation}
According to the assumption \eqref{ap}, it follows that $b_{ij}\in C^2(\overline{\Omega})$, and
$$\textrm{det}(b_{ij})=\frac{1}{\textrm{det}(a_{ij})}\neq0,$$
and there exist a $\sigma_0>0$ such that $\sigma(x)>\sigma_0$ and
$$\sigma^2(x)b_{12}(x)=b_{21}(x).$$
Assume
\begin{equation}\label{b6}
u(x,t)=e^{-s\varphi(x,t)}y_1(x,t),\quad v(x,t)=e^{-s\varphi(x,t)}y_2(x,t).
\end{equation}
Substituting \eqref{b6} into the equations in system \eqref{b1}, and let
%then we have
%\begin{equation}
%\left\{
%\begin{array}{rcl}
%M_{11}(u,v)+M_{12}(u,v)=G_1, \\
%M_{21}(u,v)+M_{21}(u,v)=G_2,
%\end{array} \right.
%\end{equation}
%where we denoted
\begin{equation}
\begin{cases}\ds
M_{11}(u,v)=2s\nabla\varphi\cdot\nabla u+s\Delta\varphi u+i(sb_{11}\varphi_tu+sb_{12}\varphi_tv),\\
\ns\ds M_{12}(u,v)=i(b_{11}u_t+b_{12}v_t)+\Delta u+s^2|\nabla \varphi|^2u,
\end{cases}
\end{equation}
\begin{equation}
\begin{cases}\ds
M_{21}(u,v)=2s\nabla\varphi\cdot\nabla v+s\Delta\varphi v+i(sb_{21}\varphi_tu+sb_{22}\varphi_tv),\\
\ns\ds M_{22}(u,v)=i(b_{21}u_t+b_{22}v_t)+\Delta v+s^2|\nabla \varphi|^2v,
\end{cases}
\end{equation}
and
\begin{equation}\label{b11}
G_1=e^{-s\varphi}F_1,\quad G_2=e^{-s\varphi}F_2,
\end{equation}
then we have
\begin{equation}\label{a11-10}
\begin{array}{ll}\ds
\|\sigma M_{11}(u,v)\|+\|\sigma M_{12}(u,v)\|+\| M_{21}(u,v)\|+\| M_{22}(u,v)\| \\
\ns\ds\quad+2\text{Re}\left(\sigma M_{11}(u,v),\sigma M_{12}(u,v)\right)+2\text{Re}\left( M_{21}(u,v),M_{22}(u,v)\right)
\\
\ns\ds=\|\sigma G_1\|+\| G_2\|.
\end{array}
\end{equation}
Here $\| u\|$ is the standard norm in $L^2(Q_T)$.

By fundamental calculation, we get
\begin{eqnarray}
&&2\text{Re}\left(\sigma M_{11}(u,v),\sigma M_{12}(u,v)\right)=I_1+I_2+I_3,\\
&&2\text{Re}\left( M_{21}(u,v),M_{22}(u,v)\right)=J_1+J_2+J_3,
\end{eqnarray}
where
\begin{equation}\nonumber
\begin{array}{ll}
\ds I_1=2\textrm{Re}\int_{Q_T}(2s\sigma\nabla\varphi\cdot\nabla u+s\sigma\Delta\varphi u)\big(-i(\sigma b_{11} \bar{u}_t+\sigma b_{12} \bar{v}_t)+\sigma\Delta \bar{u}+s^2\sigma|\nabla \varphi|^2\bar{u}\big)dxdt,  \\
\ns\ds I_2=2\textrm{Re}\int_{Q_T}i(s\sigma b_{11}\varphi_tu+s\sigma b_{12}\varphi_tv)(-i(\sigma b_{11}\bar{u}_t+\sigma b_{12} \bar{v}_t)+\sigma\Delta \bar{u})dxdt,   \\
\ns\ds I_3=2\textrm{Re}\int_{Q_T}i(s\sigma b_{11}\varphi_tu+s\sigma b_{12}\varphi_tv)(\sigma s^2|\nabla \varphi|^2\bar{u} )dxdt.
 \end{array}
\end{equation}
and
\begin{equation}\nonumber
\begin{array}{ll}
\ds J_1=2\textrm{Re}\int_{Q_T}(2s\nabla\varphi\cdot\nabla v+s\Delta\varphi v)\big(-i( b_{21} \bar{u}_t+ b_{22} \bar{v}_t)+\Delta \bar{v}+s^2|\nabla \varphi|^2\bar{v}\big)dxdt, \qquad\qquad \\
\ns\ds J_2=2\textrm{Re}\int_{Q_T}i(sb_{21}\varphi_tu+sb_{22}\varphi_tv)( -i(b_{21} \bar{u}_t+ b_{22} \bar{v}_t)+\Delta \bar{v})dxdt,  \\
\ns\ds J_3=2\textrm{Re}\int_{Q_T}i(s b_{21}\varphi_tu +sb_{22}\varphi_tv)(s^2|\nabla \varphi|^2\bar{v})dxdt.
 \end{array}
\end{equation}
By integration by parts, there holds
\begin{align} \nonumber
I_1  & =-2\textrm{Re}\int_{Q_T}i(2s\sigma\nabla\varphi\cdot\nabla u +s\sigma\Delta\varphi u )\sigma b_{11}\bar{u}_tdxdt \\\nonumber
&\quad- 2\textrm{Re}\int_{Q_T}i(2s\sigma\nabla\varphi\cdot\nabla u +s\sigma\Delta\varphi u )\sigma b_{12}\bar{v}_tdxdt \\  \nonumber
& \quad + 2\textrm{Re}\int_{Q_T}(2s\sigma\nabla\varphi\cdot\nabla u +s\sigma\Delta\varphi u )(\sigma\Delta \bar{u} +s^2\sigma|\nabla \varphi|^2\bar{u} )dxdt\\
&=: 2\textrm{Re}(I_1^1+I_1^2+I_1^3).
\end{align}
%\begin{align}\nonumber
%J_1  & =-2\textrm{Re}\int_{Q_T}i(2s\nabla\varphi\cdot\nabla v+s\Delta\varphi v) b_{21} \bar{u}_tdxdt  \\\nonumber
% &\q- 2\textrm{Re}\int_{Q_T}i(2s\nabla\varphi\cdot\nabla v+s\Delta\varphi v) b_{22}\bar{v}_tdxdt \quad \\\nonumber
%& \quad + 2\textrm{Re}\int_{Q_T}(2s\nabla\varphi\cdot\nabla  v+s\Delta\varphi v)(\Delta \bar{v}+s^2|\nabla \varphi|^2\bar{v})dxdt\qq\q\\
%&=2\textrm{Re}(J_1^1+J_1^2+J_1^3).
%\end{align}
Since $2\textrm{Re}(z)=z+\bar{z}$, we have
\begin{equation}\label{a11-1}
\begin{aligned}
 2\textrm{Re}(I_1^1)&= -\int_{Q_T}i(2s\sigma\nabla\varphi\cdot\nabla u+s\sigma\Delta\varphi u)\sigma b_{11}\bar{u}_tdxdt\\
&\q+\int_{Q_T}i(2s\sigma\nabla\varphi\cdot\nabla \bar{u}+s\sigma\Delta\varphi\bar{u})\sigma b_{11}u_tdxdt.
\end{aligned}
\end{equation}
Noting that $u(x,t)=v(x,t)=0$ on $\Gamma_T$, and $\lim\limits_{t\rightarrow 0}e^{-s\varphi(x,t)}=\lim\limits_{t\rightarrow T}e^{-s\varphi(x,t)}=0$, we obtain
\begin{equation}\label{c1}
\begin{aligned}
 & \quad-\int_{Q_T}2is\sigma^2b_{11}\nabla\varphi\cdot\nabla u \bar{u}_tdxdt+\int_{Q_T}2is\sigma^2b_{11}\nabla\varphi\cdot\nabla \bar{u}u_tdxdt   \\
&= \int_{Q_T}2is\sigma^2b_{11}\nabla\varphi_t\cdot\nabla u \bar{u}dxdt- \int_{Q_T}2is\nabla(\sigma^2b_{11})\cdot\nabla\varphi \bar{u}u_tdxdt \\
&\quad-\int_{Q_T}2is\sigma^2b_{11}\Delta\varphi\bar{u}u_tdxdt,
\end{aligned}
\end{equation}
and
\begin{equation}\label{c2}
\begin{aligned}
\int_{Q_T}is\sigma^2b_{11}\Delta\varphi (-u\bar{u}_t+\bar{u}u_t)dxdt=\int_{Q_T}is\sigma^2b_{11}(\Delta\varphi_t u+2\Delta\varphi u_t)\bar{u}dxdt.
\end{aligned}
\end{equation}
Substituting \eqref{c1} and \eqref{c2} into \eqref{a11-1} yields
\begin{equation}\label{a11-2}
\begin{aligned}
2\textrm{Re}(I_1^1)= -\textrm{Im}\int_{Q_T}2s\sigma^2b_{11}\nabla\varphi_t\cdot\nabla u \bar{u}dxdt+\textrm{Im}\int_{Q_T}2s\nabla(\sigma^2b_{11})\cdot\nabla\varphi \bar{u}u_tdxdt.
\end{aligned}
\end{equation}
Similarly, %for the terms $I_1^2+J_1^1$
we have
%\begin{equation}\label{c3}
%\begin{aligned}
%&   \q-\int_{Q_T}2is\sigma^2b_{12}\nabla\varphi\cdot\nabla u\bar{v}_tdxdt-\int_{Q_T}2isb_{21}\nabla\varphi\cdot\nabla v\bar{u}_tdxdt      \\
%&    =\int_{Q_T}2is\sigma^2b_{12}\nabla\varphi_t\cdot\nabla u\bar{v}dxdt+\int_{Q_T}2is\sigma^2b_{12}\nabla\varphi\cdot\nabla u_t\bar{v}dxdt      \\
%&     \quad +\int_{Q_T}2is\nabla b_{21}\cdot\nabla\varphi v\bar{u}_tdxdt +\int_{Q_T}2isb_{21}\Delta\varphi v\bar{u}_tdxdt\\
%&\q +\int_{Q_T}2isb_{21}\nabla\varphi\cdot\nabla\bar{u}_tvdxdt,
%\end{aligned}
%\end{equation}
%and
%\begin{align}\label{c4}\nonumber
%&\q-\int_{Q_T}is\sigma^2b_{12}\Delta\varphi u\bar{v}_tdxdt-\int_{Q_T}isb_{21}\Delta\varphi v\bar{u}_tdxdt\\
%&= \int_{Q_T}is\sigma^2b_{12}\Delta\varphi_t u\bar{v}dxdt+\int_{Q_T}is\sigma^2b_{12}\Delta\varphi u_t\bar{v}dxdt-\int_{Q_T}isb_{21}\Delta\varphi v\bar{u}_tdxdt.
%\end{align}
%Gathering together \eqref{c3} and \eqref{c4},
\begin{equation}\label{c5}
\begin{aligned}
 2\textrm{Re}(I_1^2+J_1^1)&=-\textrm{Im}\int_{Q_T}4s\sigma^2b_{12}\nabla\varphi_t\cdot\nabla u\bar{v}dxdt-\textrm{Im}\int_{Q_T}2s\sigma^2b_{12}\Delta\varphi_t u\bar{v}dxdt\\
&\q-\textrm{Im}\int_{Q_T}4s\nabla b_{21}\cdot\nabla\varphi v\bar{u}_tdxdt.
\end{aligned}
\end{equation}
Moreover, we get
\begin{align}\label{c7}\nonumber
\textrm{Re}\int_{Q_T}2s\sigma^2\nabla\varphi\cdot\nabla u\Delta \bar{u}dxdt& \! = \! \int_{\Gamma_T}s\sigma^2\frac{\partial\varphi}{\partial \nu}\left|\frac{\partial u}{\partial \nu}\right|^2dSdt \! - \! \textrm{Re}   \int_{Q_T}2s(\nabla\sigma^2\cdot\nabla\bar{u})(\nabla\varphi\cdot\nabla u)dxdt \\\nonumber
&\q+ \int_{Q_T}s(\nabla\sigma^2\cdot\nabla \varphi)|\nabla u|^2dxdt+\int_{Q_T}s\sigma^2\Delta\varphi|\nabla u|^2dxdt\\
&\q -\textrm{Re}  \int_{Q_T}2s\sigma^2\sum_{i,j=1}^N\partial_j\partial_i\varphi\partial_i u\partial_j \bar{u}dxdt,
\end{align}
and
\begin{align}\nonumber
\textrm{Re}\int_{Q_T}s\sigma^2\Delta\varphi u\Delta\bar{u}dxdt  & = -\textrm{Re}\int_{Q_T}s\nabla(\sigma^2\Delta\varphi)\cdot\nabla\bar{u}udxdt-\int_{Q_T}s\sigma^2\Delta\varphi |\nabla u|^2dxdt\qquad\\
&= \frac{1}{2}\int_{Q_T}s\Delta(\sigma^2\Delta\varphi)|u|^2dxdt-\int_{Q_T}s\sigma^2\Delta\varphi |\nabla u|^2dxdt.
\end{align}
On the other hand,
\begin{equation}\label{c9}
\begin{aligned}
\textrm{Re}\int_{Q_T}2s^3\sigma^2|\nabla\varphi|^2\nabla\varphi\cdot\nabla u\bar{u}dxdt & = -\int_{Q_T}s^3\nabla(\sigma^2|\nabla\varphi|^2)\cdot\nabla\varphi|u|^2dxdt\\
&\q-\int_{Q_T}s^3\sigma^2|\nabla\varphi|^2\Delta\varphi|u|^2dxdt.
\end{aligned}
\end{equation}
This implies that
\begin{equation}\label{a11-3}
\begin{aligned}
2\textrm{Re}(I_1^3)&=  \int_{\Gamma_T}2s\sigma^2\frac{\partial\varphi}{\partial \nu}\left|\frac{\partial u}{\partial \nu}\right|^2dSdt - \textrm{Re}\int_{Q_T}4s(\nabla\sigma^2\cdot\nabla\bar{u})(\nabla\varphi\cdot\nabla u)dxdt \\
&\q+2\int_{Q_T}s(\nabla\sigma^2\cdot\nabla \varphi)|\nabla u|^2dxdt- \textrm{Re}\int_{Q_T}4s\sigma^2\sum_{i,j=1}^N\partial_j\partial_i\varphi\partial_i u\partial_j \bar{u}dxdt  \\
& \quad +\int_{Q_T}s\Delta(\sigma^2\Delta\varphi)|u|^2dxdt-2\int_{Q_T}s^3\nabla(\sigma^2|\nabla\varphi|^2)
\cdot\nabla\varphi|u|^2dxdt.
\end{aligned}
\end{equation}
$J_1^2$ and $J_1^3$ can be computed by similar process. Hence,
\begin{align}\nonumber
 I_1+J_1&=  \int_{\Gamma_T}2s\sigma^2\frac{\partial\varphi}{\partial \nu}\left|\frac{\partial u}{\partial \nu}\right|^2dSdt+ \int_{\Gamma_T}2s\frac{\partial\varphi}{\partial \nu}|\frac{\partial v}{\partial \nu}|^2dSdt\\\nonumber
 &\q -\textrm{Re} \int_{Q_T}4s\sigma^2\sum_{i,j=1}^N\partial_j\partial_i\varphi\partial_i u\partial_j \bar{u}dxdt   - \textrm{Re}\int_{Q_T}4s\sum_{i,j=1}^N\partial_j\partial_i\varphi\partial_i v\partial_j \bar{v}dxdt \\\nonumber
 &\q-\textrm{Re}\int_{Q_T}4s(\nabla\sigma^2\cdot\nabla\bar{u})(\nabla\varphi\cdot\nabla u)dxdt+ \int_{Q_T}2s(\nabla\sigma^2\cdot\nabla \varphi)|\nabla u|^2dxdt\\\nonumber
&\q+ \int_{Q_T}s\Delta(\sigma^2\Delta\varphi)|u|^2dxdt \! + \! \int_{Q_T}s\Delta^2\varphi|v|^2dxdt \! - \! \int_{Q_T}2s^3\nabla(|\nabla\varphi|^2)\cdot\nabla\varphi|v|^2dxdt\\\nonumber
&\q- \int_{Q_T}2s^3\nabla(\sigma^2|\nabla\varphi|^2)\cdot\nabla\varphi|u|^2dxdt - \textrm{Im}\int_{Q_T}2s\sigma^2b_{11}\nabla\varphi_t\cdot\nabla u \bar{u}dxdt\\\nonumber
&\q  + \textrm{Im}\int_{Q_T}2s\nabla(\sigma^2b_{11})\cdot\nabla\varphi \bar{u}u_tdxdt - \textrm{Im}\int_{Q_T}2sb_{22}\nabla\varphi_t\cdot\nabla v \bar{v}dxdt\\\nonumber
&\q +\textrm{Im}\int_{Q_T}2s\nabla b_{22}\cdot\nabla\varphi \bar{v}v_tdxdt- \textrm{Im}\int_{Q_T}4s\sigma^2b_{12}\nabla\varphi_t\cdot\nabla u\bar{v}dxdt\\\label{a11-7}
&\q  - \textrm{Im}\int_{Q_T}2s\sigma^2b_{12}\Delta\varphi_t u\bar{v}dxdt - \textrm{Im}\int_{Q_T}4s\nabla b_{21}\cdot\nabla\varphi v\bar{u}_tdxdt.
\end{align}
Next we calculate $I_2+J_2$.
\begin{equation}\label{a11-6}
\begin{aligned}
I_2 & = 2\textrm{Re}\int_{Q_T}i(s\sigma b_{11}\varphi_tu+s\sigma b_{12}\varphi_tv)(-i(\sigma b_{11}\bar{u}_t+\sigma b_{12}\bar{v}_t)+\sigma\Delta \bar{u})dxdt  \\
&=2\textrm{Re}\int_{Q_T}s\sigma^2\varphi_t(b_{11}^2u\bar{u}_t+b_{11}b_{12}u\bar{v}_t+b_{11}b_{12}v\bar{u}_t+b_{12}^2v\bar{v}_t)dxdt\\
&\quad -2\textrm{Im}\int_{Q_T}s\sigma^2b_{11}\varphi_tu\Delta\bar{u}dxdt -2\textrm{Im}\int_{Q_T}s\sigma^2b_{12}\varphi_tv\Delta\bar{u}dxdt,
\end{aligned}
\end{equation}
\begin{equation}
\begin{aligned}
J_2 & =2\textrm{Re}\int_{Q_T}s\varphi_t(b_{21}^2u\bar{u}_t+b_{21}b_{22}u\bar{v}_t+b_{21}b_{22}v\bar{u}_t+b_{22}^2v\bar{v}_t)dxdt\\
&\quad -2\textrm{Im}\int_{Q_T}sb_{21}\varphi_tu\Delta\bar{v}dxdt-2\textrm{Im}\int_{Q_T}sb_{22}\varphi_tv\Delta\bar{v}dxdt.
\end{aligned}
\end{equation}
Integrating by parts with respect to $t$, we get
\begin{equation}\label{a11-4}
\begin{aligned}
2\textrm{Re}\int_{Q_T}s\sigma^2b_{11}^2\varphi_tu\bar{u}_tdxdt&= \int_{Q_T}s\sigma^2b_{11}^2\varphi_t(u\bar{u}_t + \bar{u}u_t)dxdt \\ &=-\int_{Q_T}s\sigma^2b_{11}^2\varphi_{tt}|u|^2dxdt,
\end{aligned}
\end{equation}
\begin{equation}
\begin{aligned}
2\textrm{Re}\int_{Q_T}s\sigma^2b_{12}^2\varphi_tv\bar{v}_tdxdt& = \int_{Q_T}s\sigma^2b_{12}^2\varphi_t(v\bar{v}_t + \bar{v}v_t)dxdt\\
&= - \int_{Q_T}s\sigma^2b_{12}^2\varphi_{tt}|v|^2dxdt,
\end{aligned}
\end{equation}
\begin{equation}
\begin{aligned}
  2\textrm{Re}\int_{Q_T}s\sigma^2b_{11}b_{12}\varphi_t(u\bar{v}_t+\bar{u}_tv)dxdt=
-\textrm{Re}\int_{Q_T}2s\sigma^2b_{11}b_{12}\varphi_{tt}u\bar{v}dxdt.
\end{aligned}
\end{equation}
On the other hand,
\begin{equation}
\begin{aligned}
-2\textrm{Im}\int_{Q_T}s\sigma^2b_{11}\varphi_tu\Delta\bar{u}dxdt&=\int_{Q_T}is\sigma^2b_{11}\varphi_t(u\Delta\bar{u}-\bar{u}\Delta u)dxdt\\
&=\textrm{Im}\int_{Q_T}2s\nabla(\sigma^2b_{11}\varphi_t)\cdot\nabla \bar{u}udxdt,
\end{aligned}
\end{equation}
and
\begin{align}\nonumber
\int_{Q_T}s\sigma^2b_{12}\varphi_tv\Delta\bar{u}dxdt  & = -\int_{Q_T}s\nabla(\sigma^2b_{12}\varphi_t)\cdot\nabla\bar{u}vdxdt
-\int_{Q_T}s\sigma^2b_{12}\varphi_t\nabla v\cdot\nabla\bar{u}dxdt\qquad\qquad  \\\nonumber
& = \int_{Q_T}s\Delta(\sigma^2b_{12}\varphi_t)\bar{u}vdxdt+2\int_{Q_T}s\nabla(\sigma^2b_{12}\varphi_t)\cdot\nabla v\bar{u}dxdt \\\label{a11-5}
&\quad +\int_{Q_T}s\sigma^2b_{12}\varphi_t\Delta v\bar{u}dxdt.
\end{align}
Substituting \eqref{a11-4}--\eqref{a11-5} into \eqref{a11-6}, and with similar analysis to $J_2$, we have
\begin{align}\nonumber
I_2+J_2 & = -\int_{Q_T}s\sigma^2b_{11}^2\varphi_{tt}|u|^2dxdt-\int_{Q_T}s\sigma^2b_{12}^2\varphi_{tt}|v|^2dxdt  \\\nonumber
& \quad -\int_{Q_T}sb_{21}^2\varphi_{tt}|u|^2dxdt-\int_{Q_T}sb_{22}^2\varphi_{tt}|v|^2dxdt\\\nonumber
&\q-\textrm{Re}\int_{Q_T}2s\sigma^2b_{11}b_{12}\varphi_{tt}u\bar{v}dxdt-\textrm{Re}\int_{Q_T}2sb_{21}b_{22}\varphi_{tt}\bar{u}vdxdt \\\nonumber
& \quad +\textrm{Im}\int_{Q_T}2s\nabla(\sigma^2b_{11}\varphi_t)\cdot\nabla\bar{u}u dxdt+\textrm{Im}\int_{Q_T}2s\nabla(b_{22}\varphi_t)\cdot\nabla\bar{v}vdxdt \\\label{a11-8}
&\q-\textrm{Im}\int_{Q_T}2s\Delta(\sigma^2b_{12}\varphi_t)\bar{u}vdxdt-\textrm{Im}\int_{Q_T}4s\nabla(\sigma^2b_{12}\varphi_t)\cdot\nabla v\bar{u}dxdt.
\end{align}
Noting that
\begin{equation}
\begin{aligned}
I_3&=2\textrm{Re}\int_{Q_T}i(s\sigma b_{11}\varphi_tu+s\sigma b_{12}\varphi_tv)(\sigma s^2|\nabla \varphi|^2\bar{u})dxdt\\
&=-\textrm{Im}\int_{Q_T}2s^3\sigma^2b_{12}|\nabla \varphi|^2\varphi_tv\bar{u}dxdt,
\end{aligned}
\end{equation}
\begin{equation}
\begin{aligned}
J_3 &= 2\textrm{Re}\int_{Q_T}i(s b_{21}\varphi_tu + s b_{22}\varphi_tv)( s^2|\nabla \varphi|^2\bar{v})dxdt\qq \\
&= -\textrm{Im}\int_{Q_T}2s^3b_{21}|\nabla \varphi|^2\varphi_tu\bar{v}dxdt.
\end{aligned}
\end{equation}
Thus
\begin{equation}\label{a11-9}
 I_3+J_3=0.
 \end{equation}
Substituting \eqref{a11-7}, \eqref{a11-8} and \eqref{a11-9} into \eqref{a11-10}, and noting that
\begin{enumerate}[1)]
      \item  $\partial_i\varphi=-\lambda \theta\partial_i\psi,\q \partial_j\partial_i\varphi=-\theta(\lambda^2\partial_i\psi\partial_j\psi+\lambda\partial_j\partial_i\psi),$
      \item  $\frac{\partial\varphi}{\partial \nu}= -\lambda\theta\frac{\partial\psi}{\partial \nu},$
      \item  $|\varphi_t|\leq C_1\theta^2$, $|\varphi_{tt}|\leq C_2\theta^3,\q \exists C_1,C_2>0,$
      \item $-\partial_j\partial_i\varphi\partial_i u\partial_j \bar{u}=\lambda\theta(\lambda|\nabla\psi\cdot\nabla u|^2+\partial_j\partial_i\psi\partial_i u\partial_j \bar{u}),$
      \item for $\lambda$ sufficiently large and $C>0$, there holds
          $-\nabla(|\nabla\varphi|^2)\cdot\nabla\varphi\geq C\lambda^4\theta^3|\nabla\psi|^4,$
 \end{enumerate}
 then, for $s$ and $\lambda$ sufficiently large, there holds
 \begin{align}\label{d1}\nonumber
\|\sigma G_1\|+\| G_2\| &\geq \|\sigma M_{11}(u ,v)\|+\|\sigma M_{12}(u ,v)\|+\| M_{21}(u ,v)\|+\| M_{22}(u ,v)\|\\\nonumber
&\q - \int_{\Gamma_T}2s\lambda\theta\frac{\partial\psi}{\partial \nu}\left(\sigma^2\left|\frac{\partial u}{\partial \nu}\right|^2+\left|\frac{\partial v}{\partial \nu}\right|^2\right)dSdt-\textrm{Im}\int_{Q_T}4s\nabla b_{21}\cdot\nabla\varphi v\bar{u}_tdxdt\\\nonumber
&\q   + \textrm{Im}\int_{Q_T}2s\nabla(\sigma^2b_{11})\cdot\nabla\varphi \bar{u}u_tdxdt+\textrm{Im}\int_{Q_T}2s\nabla b_{22}\cdot\nabla\varphi \bar{v}v_tdxdt   \\\nonumber
 &\q+ \textrm{Re}\int_{Q_T}4\sigma^2s\lambda\theta\big(\lambda|\nabla\psi\cdot\nabla u|^2+\sum_{i,j=1}^N\partial_j\partial_i\psi\partial_i u\partial_j \bar{u}\big)dxdt\\\nonumber
 &\q +\textrm{Re}\int_{Q_T}4s\lambda\theta\big(\lambda|\nabla\psi\cdot\nabla v|^2+\sum_{i,j=1}^N\partial_j\partial_i\psi\partial_i v\partial_j \bar{v}\big)dxdt\\\nonumber
 &\q +  \textrm{Re}\int_{Q_T}4s\lambda\theta(\nabla\sigma^2\cdot\nabla\bar{u})(\nabla\psi\cdot\nabla u)dxdt- \int_{Q_T}2s\lambda\theta(\nabla\sigma^2\cdot\nabla \psi)|\nabla u|^2dxdt\\\nonumber
 &\q - \textrm{Re}\int_{Q_T}Cs\theta^3\sigma^2b_{11}b_{12}u\bar{v}dxdt-  \textrm{Re}\int_{Q_T}Cs\theta^3b_{21}b_{22}\bar{u}vdxdt\\\nonumber
 &\q - \textrm{Im}\int_{Q_T}Cs\theta^2\Delta b_{21}\bar{u}vdxdt - \textrm{Im}\int_{Q_T}Cs\lambda\theta^2\nabla b_{21}\cdot\nabla\psi\bar{u}vdxdt  \\\nonumber
 &\q -\textrm{Im}\int_{Q_T}Cs\lambda\theta^2\sigma^2b_{11}\nabla\psi\cdot\nabla u \bar{u}dxdt-  \textrm{Im}\int_{Q_T}Cs\lambda\theta^2b_{22}\nabla\psi\cdot\nabla v \bar{v}dxdt\\\nonumber
 &\q - \textrm{Im}\int_{Q_T}Cs\theta^2\nabla(\sigma^2b_{11})\cdot\nabla u\bar{u}dxdt- \textrm{Im}\int_{Q_T}Cs\theta^2\nabla b_{22}\cdot\nabla v\bar{v}dxdt\\\nonumber
 &\q - \textrm{Im}\int_{Q_T}Cs\lambda\theta^2b_{21}\nabla\psi\cdot\nabla u\bar{v}dxdt-  \textrm{Im}\int_{Q_T}Cs\lambda\theta^2b_{21}\nabla\psi\cdot\nabla v\bar{u}dxdt \\
 &\q   +\int_{Q_T}Cs^3\lambda^4\theta^3\sigma^2|\nabla\psi|^4|u|^2dxdt+ \int_{Q_T}Cs^3\lambda^4\theta^3|\nabla\psi|^4|v|^2dxdt.
\end{align}
By $\textrm{H\"{o}lder's}$ inequality, and for $s$ and $\lambda$ sufficiently large, we have
\begin{align}\label{d3}\nonumber
\|\sigma G_1\|+\| G_2\|&\geq \|\sigma M_{11}(u ,v)\|+\|\sigma M_{12}(u ,v)\|+\| M_{21}(u ,v)\|+\| M_{22}(u ,v)\|\\\nonumber
&\q  - \int_{\Gamma_T}2s\lambda\theta\frac{\partial\psi}{\partial \nu}\left(\sigma^2\left|\frac{\partial u}{\partial \nu}\right|^2+\left|\frac{\partial v}{\partial \nu}\right|^2\right)dSdt-\textrm{Im}\int_{Q_T}4s\nabla b_{21}\cdot\nabla\varphi v\bar{u}_tdxdt\\\nonumber
&\q   + \textrm{Im}\int_{Q_T}2s\nabla(\sigma^2b_{11})\cdot\nabla\varphi \bar{u}u_tdxdt+\textrm{Im}\int_{Q_T}2s\nabla b_{22}\cdot\nabla\varphi \bar{v}v_tdxdt   \\\nonumber
&\q  +C\int_{Q_T}s^3\lambda^4\theta^3\big(|u|^2+|v|^2\big)dxdt+C\int_{Q_T}s\lambda^2\theta\big(|\nabla u|^2+|\nabla v|^2\big)dxdt \\
&\q  -C\int_{\omega_T}s^3\lambda^4\theta^3\big(|u|^2+|v|^2\big)dxdt+C\int_{\omega_T}s\lambda^2\theta\big(|\nabla u|^2+|\nabla v|^2\big)dxdt.
\end{align}
Further, we need to deal with $\int_{Q_T}2s\nabla(\sigma^2b_{11})\cdot\nabla\varphi \bar{u}u_tdxdt$, $\int_{Q_T}2s\nabla b_{22}\cdot\nabla\varphi \bar{v}v_tdxdt$ and $-\int_{Q_T}4s\nabla b_{21}\cdot\nabla\varphi v\bar{u}_tdxdt$, such that $u_t$ and $v_t$ can be simultaneously absorbed by the other terms. By substituting \eqref{b6} into \eqref{b1} and multiplying the first and second equations in \eqref{b1} by $e^{-s\varphi}$, we obtain:
\begin{equation}
\begin{aligned}
 iu_t&=e^{-s\varphi}f_1-is\varphi_tu-s^2a_{11}|\nabla\varphi|^2u-2sa_{11}\nabla\varphi\cdot\nabla u-sa_{11}\Delta\varphi u-a_{11}\Delta u\\
&\q-s^2a_{12}|\nabla\varphi|^2v-2sa_{12}\nabla\varphi\cdot\nabla v-sa_{12} \Delta\varphi v-a_{12}\Delta v,
\end{aligned}
\end{equation}
\begin{equation}
\begin{aligned}
 iv_t&=e^{-s\varphi}f_2-is\varphi_tv-s^2a_{21}|\nabla\varphi|^2u-2sa_{21}\nabla\varphi\cdot\nabla u-sa_{21}\Delta\varphi u-a_{21}\Delta u\\
&\q-s^2a_{22}|\nabla\varphi|^2v-2sa_{22}\nabla\varphi\cdot\nabla v-sa_{22} \Delta\varphi v-a_{22}\Delta v.
\end{aligned}
\end{equation}
Then
\begin{align}\label{e3}\nonumber
&\q\textrm{Im}\int_{Q_T}2s\nabla(\sigma^2b_{11})\cdot\nabla\varphi \bar{u}u_tdxdt\\\nonumber
&=-\textrm{Re}\int_{Q_T}2s\nabla(\sigma^2b_{11})\cdot\nabla\varphi \bar{u}(iu_t)dxdt\\\nonumber
&=\textrm{Re}\left\{ -\int_{Q_T}2s\nabla(\sigma^2b_{11})\cdot\nabla\varphi \bar{u}f_1e^{-s\varphi}dxdt+\int_{Q_T}2s^3a_{11}|\nabla\varphi|^2\nabla(\sigma^2b_{11})\cdot\nabla\varphi|u|^2dxdt\right.\\\nonumber
&\qq\q \!-\! \int_{Q_T}2s^2\nabla(a_{11}\nabla(\sigma^2b_{11})\cdot\nabla\varphi)\cdot
\nabla\varphi|u|^2dxdt\!-\!\int_{Q_T}2s\nabla(a_{11}\nabla(\sigma^2b_{11})\cdot\nabla\varphi)\cdot\nabla u\bar{u}dxdt \\\nonumber
&\qq\q-\int_{Q_T}2sa_{11}\nabla(\sigma^2b_{11})\cdot\nabla\varphi|\nabla u|^2dxdt+\int_{Q_T}2s^3a_{12}|\nabla\varphi|^2\nabla(\sigma^2b_{11})\cdot\nabla\varphi\bar{u}vdxdt\\\nonumber
&\qq\q \! + \! \int_{Q_T}4s^2a_{12}\nabla(\sigma^2b_{11})\cdot\nabla\varphi\nabla\varphi\cdot\nabla v\bar{u}dxdt+\int_{Q_T}2s^2a_{12}\nabla(\sigma^2b_{11})\cdot\nabla\varphi\Delta\varphi\bar{u}vdxdt\\
&\left.\qq\q-\int_{Q_T}2s\nabla(a_{12}\nabla(\sigma^2b_{11})\cdot\nabla\varphi)\cdot\nabla v\bar{u}dxdt-\int_{Q_T}2sa_{12}\nabla(\sigma^2b_{11})\cdot\nabla\varphi\nabla v\cdot\nabla\bar{u}dxdt\right\}.
\end{align}
 Similarly, we can handle $\int_{Q_T}2s\nabla b_{22}\cdot\nabla\varphi \bar{v}v_tdxdt$ and $-\int_{Q_T}4s\nabla b_{21}\cdot\nabla\varphi v\bar{u}_tdxdt$, and using Young's inequality we have
\begin{align}\label{e4}\nonumber
&\q\textrm{Im}\left\{-\int_{Q_T}4s\nabla b_{21}\cdot\nabla\varphi v\bar{u}_tdxdt \! + \! \int_{Q_T}2s\nabla(\sigma^2b_{11})\cdot\nabla\varphi \bar{u}u_tdxdt \! + \! \int_{Q_T}2s\nabla b_{22}\cdot\nabla\varphi \bar{v}v_tdxdt \right\}  \\\nonumber
&\geq -\varepsilon\int_{Q_T}\left(|f_1|^2+|f_2|^2\right)e^{-2s\varphi}dxdt-C_\varepsilon\int_{Q_T}s^2\lambda^2\theta^3\left(|u|^2+|v|^2\right)dxdt\\
&\q-C\int_{Q_T}\left(s^3\lambda^3\theta^3+s^2\lambda^3\theta^2+s\lambda^3\theta+s\lambda\theta\right)\left(|u|^2+|v|^2\right)dxdt\\\nonumber
&\q-C\int_{Q_T}s\lambda\theta\left(|\nabla u|^2+|\nabla v|^2\right)dxdt,
\end{align}
where $\varepsilon>0$ is a sufficiently small constant. Thus \eqref{e5} can be obtained by using \eqref{b4}, \eqref{b6}, \eqref{b11}, \eqref{d3} and \eqref{e4}.
%\begin{align}\label{e5}\nonumber
%&\q  \int_{Q_T}\left(| \tilde{M}_{11}(y_1 ,y_2)|^2+| \tilde{M}_{12}(y_1 ,y_2)|^2+|  \tilde{M}_{21}(y_1 ,y_2)|^2+|  \tilde{M}_{22}(y_1 ,y_2)|^2\right)e^{-2s\varphi}dxdt\\\nonumber
%&\q  +\int_{Q_T}\left[s^3\lambda^4\theta^3\big(|y_1|^2+|y_2|^2\big)+s\lambda^2\theta\big(|\nabla y_1|^2+|\nabla y_2|^2\big)\right]e^{-2s\varphi}dxdt\\\nonumber
%&\q  +\int_{\Gamma_T}s\lambda\theta\left|\frac{\partial\psi}{\partial \nu}\right|\left(\left|\frac{\partial y_1}{\partial \nu}\right|^2+\left|\frac{\partial y_2}{\partial \nu}\right|^2\right)e^{-2s\varphi}dSdt\\\nonumber
%&\leq C\int_{Q_T}\left(|f_1|^2+|f_2|^2\right)e^{-2s\varphi}dxdt + C\int_{\omega_T}s^3\lambda^4\theta^3\big(|y_1|^2+|y_2|^2\big)e^{-2s\varphi}dxdt\\
%&\q+ C\int_{\omega_T}s\lambda^2\theta\big(|\nabla y_1|^2+|\nabla y_2|^2\big)e^{-2s\varphi}dxdt,
%\end{align}
%for sufficiently large $s$ and $\lambda$.
\end{proof}
The following Carleman estimate is a direct consequence of the proposition \ref{A}. Consider the system \eqref{a1}, we have the following result.
%following strongly coupled  $\textrm{Schr\"{o}dinger} $ equations:
%\begin{equation} \label{a111}
%  \left\{
%   \begin{array}{ll}
%\ds i\partial_t y_1+a_{11}\Delta y_1+a_{12}\Delta y_2+a(x)y_1+b(x)y_2=f_1,\quad &\textrm{in}\ \Omega\times(0,T),  \\
%\ns\ds i\partial_t y_2+a_{21}\Delta y_1+a_{22}\Delta y_2+c(x)y_1+d(x)y_2=f_2,\quad   &\textrm{in}\ \Omega\times(0,T),   \\
%\ns\ds y_1(x,t)=y_2(x,t)=0,\quad  &\textrm{on}\ \Gamma\times(0,T), \\
%\ns\ds y_1(x,0)=y_{10},\ y_2(x,0)=y_{20}, \quad&\textrm{in}\ \Omega.
%   \end{array}
%   \right.
%  \end{equation}
\bt\label{B}
Let $\psi$, $\varphi$ and $\theta$ be as in proposition \ref{A}, let $f_1$, $f_2\in L^2(\overline{Q_T})$, and $(y_1,y_2)\in [H^1(0,T;L^2(\Omega))\cap L^2(0,T;H^2(\Omega))]^2$ is the solution of \eqref{a1} with $g_1(x,t)=g_2(x,t)=0$, $a_{ij}\in C^2(\overline{\Omega})$ and satisfy \eqref{ap}. Then there exist constants $s_0\geq1$, $\lambda_0\geq1$ and $C>0$ such that for all $s>s_0$, $\lambda>\lambda_0$, the following inequality holds:
\begin{align}\label{e6}\nonumber
&\q  \int_{Q_T}\left(|\tilde{M}_{11}(y_1 ,y_2)|^2+| \tilde{M}_{12}(y_1 ,y_2)|^2+| \tilde{M}_{21}(y_1 ,y_2)|^2+| \tilde{M}_{22}(y_1 ,y_2)|^2\right)e^{-2s\varphi}dxdt\\\nonumber
&\q  +\int_{Q_T}\left[s^3\lambda^4\theta^3\big(|y_1|^2+|y_2|^2\big)+s\lambda^2\theta\big(|\nabla y_1|^2+|\nabla y_2|^2\big)\right]e^{-2s\varphi}dxdt\\\nonumber
&\q  +\int_{\Gamma_T}s\lambda\theta\left(\left|\frac{\partial y_1}{\partial \nu}\right|^2+\left|\frac{\partial y_2}{\partial \nu}\right|^2\right)e^{-2s\varphi}dSdt\\\nonumber
&\leq C\int_{Q_T}\left(|f_1|^2+|f_2|^2\right)e^{-2s\varphi}dxdt + C\int_{\omega_T}s^3\lambda^4\theta^3\big(|y_1|^2+|y_2|^2\big)e^{-2s\varphi}dxdt\\
&\q+ C\int_{\omega_T}s\lambda^2\theta\big(|\nabla y_1|^2+|\nabla y_2|^2\big)e^{-2s\varphi}dxdt,
\end{align}
where $\tilde{M}_{11}$, $\tilde{M}_{12}$, $\tilde{M}_{21}$ and $\tilde{M}_{22}$ are defined in \eqref{h1} and \eqref{h2}.
\et
\subsection{IP (I)}

For the inverse problem, we consider the system \eqref{a1},
%following strongly coupled  $\textrm{Schr\"{o}dinger} $ equations
%\begin{equation} \label{f1}
%  \left\{
%   \begin{array}{ll}
%\ds i\partial_t y_1+a_{11}\Delta y_1+a_{12}\Delta y_2+a(x)y_1+b(x)y_2=0,\quad &\textrm{in}\ \Omega\times(0,T),  \\
%\ns\ds i\partial_t y_2+a_{21}\Delta y_1+a_{22}\Delta y_2+c(x)y_1+d(x)y_2=0,\quad   &\textrm{in}\ \Omega\times(0,T),   \\
%\ns\ds y_1(x,t)=g_1(x,t),\ y_2(x,t)=g_2(x,t),\quad  &\textrm{on}\ \Gamma\times(0,T), \\
%\ns\ds y_1(x,0)=y_{10}(x),\  y_2(x,0)=y_{20}(x) \quad&\textrm{in}\ \Omega,
%   \end{array}
%   \right.
%  \end{equation}
%where $a_{ij}$ satisfy the assumption \eqref{ap}, and give the following result.
The following assumptions are needed in the proof of stability estimate of IP (I) and IP (II).
\begin{itemize}\label{a11-15}
      \item[\textit{A1.}] $a_{ij}\in C^2(\overline{\Omega})$ such that
          $$a_{12}(x)a_{21}(x)>0,\ \textrm{det}(a_{ij})\neq0,\ a_{22}(x)\cdot\textrm{det}(a_{ij})>0,\qq x\in\overline{\Omega}.$$
      \item[\textit{A2.}] there exists a $\sigma_0>0$ such that $\sigma(x)>\sigma_0$ and
             $$\sigma^2(x)a_{12}(x)=a_{21}(x).$$
      \item[\textit{A3.}]  $a,\tilde{a},b,c,d\in C(\bar{\Omega});$
      \item[\textit{A4.}]  $y_{10}(x)\in \mathbb{R}$ or $iy_{10}(x)\in \mathbb{R}$ a.e. in $\Omega.$
      \item[\textit{A5.}]  there exist a positive constant $r$ such that
      $$|y_{10}|\geq r >0\q \textrm{a.e. in}\ \Omega.$$
 \end{itemize}
\begin{theorem}\label{C}
Let $(y_1,y_2)\in [H^2(0,T;L^2(\Omega))\cap H^1(0,T;H^2(\Omega))]^2$ is the solution of \eqref{a1}, and the assumptions $A1-A5$ hold. Then there exists a constant $C>0$ such that
\begin{align}\label{f2}\nonumber
\|\tilde{a}-a\|^2_{L^2(\Omega)}\leq C\left(\|\tilde{y}_1(\tilde{a})-y_1(a)\|^2_{H^1(0,T;H^1(\omega))}+\|\tilde{y}_2(\tilde{a})-y_2(a)\|^2_{H^1(0,T;H^1(\omega))}\right),
\end{align}
where $(y_1,y_2)$, $(\tilde{y}_1,\tilde{y}_2)$ are solutions of \eqref{a1} corresponding to $a$ and $\tilde{a}$, respectively.
\end{theorem}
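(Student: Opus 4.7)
The plan is to apply the Bukhgeim--Klibanov method: reduce the inverse problem to a Cauchy problem for the time derivative of the difference of the two solutions, then control the unknown coefficient through the Carleman estimate of Theorem~\ref{B}.

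\emph{Setup.} Set $z_k = \tilde y_k - y_k$ for $k = 1, 2$. Writing $\tilde a\tilde y_1 - a y_1 = (\tilde a - a)\tilde y_1 + a z_1$ and subtracting the two copies of \eqref{a1}, $(z_1, z_2)$ solves a system of the form \eqref{a1} with homogeneous initial and boundary data and sources $f_1 = -(\tilde a - a)\tilde y_1$, $f_2 = 0$. The regularity $[H^2(0,T;L^2)\cap H^1(0,T;H^2)]^2$ ensures that $(p_1, p_2) := (\partial_t z_1, \partial_t z_2)$ satisfies the hypotheses of Theorem~\ref{B} after differentiating in $t$, with source $-(\tilde a - a)\partial_t\tilde y_1$ in the first equation, zero in the second, and $p_k|_\Gamma = 0$. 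Evaluating the $z_1$-equation at $t = 0$ and using $z_k(\cdot, 0) = 0$ gives the crucial identity
$$p_1(x, 0) = i\bigl(\tilde a(x) - a(x)\bigr)y_{10}(x),$$
so by assumption $A5$ we have $|\tilde a(x) - a(x)|^2 \leq r^{-2}|p_1(x, 0)|^2$ a.e.\ in $\Omega$.

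\emph{Extension.} Because $\varphi$ is singular at $t = 0$, to exploit this pointwise identity I would use $A4$ and the time-reversal symmetry of Schr\"odinger systems with real coefficients to extend all functions to $\Omega \times (-T, T)$ by $\hat y_k(x, t) := \overline{y_k(x, -t)}$ for $t < 0$ (well-defined at $t = 0$ by $A4$). I replace the weight by $\tilde\varphi(x, t) = (e^{\lambda C_\psi} - e^{\lambda\psi(x)})/[(t+T)(T-t)]$, singular at $t = \pm T$ and minimized at $t = 0$; the proof of Theorem~\ref{B} transcribes verbatim and yields its analog on $(-T, T)$, which I apply to $(\hat p_1, \hat p_2)$.

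\emph{Key identity and absorption.} The calculation
$$\int_\Omega |p_1(x, 0)|^2 e^{-2s\tilde\varphi(x, 0)}\,dx = \int_{-T}^0\!\!\int_\Omega \partial_t\bigl(|p_1|^2 e^{-2s\tilde\varphi}\bigr)\,dx\,dt$$
(the $t = -T$ boundary term vanishes by the singularity of $\tilde\varphi$) bounds the left-hand side by integrals of $s^3\lambda^4\tilde\theta^3|p_1|^2 e^{-2s\tilde\varphi}$ and $\tilde\theta^{-1}|\partial_t p_1|^2 e^{-2s\tilde\varphi}$, using Young's inequality and $|\tilde\varphi_t|\lesssim\tilde\theta^2$. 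Both are dominated by the Carleman LHS: the first directly, and for the second the operators $\tilde M_{12}, \tilde M_{22}$ encode $ib_{ij}\partial_t p_k$, which can be inverted for $\partial_t p_k$ modulo lower order since $\det(b_{ij})\neq 0$. Combining with Theorem~\ref{B} applied to $(\hat p_1, \hat p_2)$ yields
$$\int_\Omega |p_1(x, 0)|^2 e^{-2s\tilde\varphi(x, 0)}\,dx \leq C\!\int |\tilde a - a|^2|\partial_t\tilde y_1|^2 e^{-2s\tilde\varphi}\,dx\,dt + C\cdot(\text{observation terms}).$$

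\emph{Main obstacle.} The source term is quadratic in $\tilde a - a$, matching the order of the LHS once $p_1(x, 0) = i(\tilde a - a)y_{10}$ and $A5$ are invoked. The gain required for absorption comes from the Laplace-type estimate $\int_{-T}^T e^{-2s\tilde\varphi(x, t)}\,dt \leq Cs^{-1/2} e^{-2s\tilde\varphi(x, 0)}$, valid because $\tilde\varphi(x, \cdot)$ is strictly convex at its minimum $t = 0$; together with the uniform bound on $\|\partial_t\tilde y_1\|_{L^\infty(Q_T)}$ from the regularity assumption, the source term is absorbed into the LHS for $s$ sufficiently large. Finally, since $e^{-2s\tilde\varphi(x, 0)}$ is bounded below uniformly on $\overline\Omega$ and the observation terms are controlled by $\|\partial_t z_k\|_{L^2(0,T;H^1(\omega))}^2 \leq \|\tilde y_k - y_k\|_{H^1(0,T;H^1(\omega))}^2$, we obtain \eqref{f2}.
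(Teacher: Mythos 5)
Your overall strategy is the same Bukhgeim--Klibanov scheme as the paper (same reduction to the system for $z_k$ with source $f\,\tilde y_1$, same conjugate-reflection extension using A4, evaluation of $\partial_t z_1$ at the time-symmetry point via A5, and absorption of the quadratic source using the weight's minimum there combined with Theorem~\ref{B}). The difference, and the problem, is in your ``Key identity and absorption'' step. After expanding $\partial_t\bigl(|p_1|^2e^{-2s\tilde\varphi}\bigr)=2\,\mathrm{Re}(\partial_t p_1\,\bar p_1)e^{-2s\tilde\varphi}-2s\tilde\varphi_t|p_1|^2e^{-2s\tilde\varphi}$ and applying Young's inequality, you are left with a term of the form $\int\!\!\int\theta^{-\beta}s^{-\alpha}|\partial_t p_1|^2e^{-2s\tilde\varphi}$, and you claim this is dominated by the Carleman left-hand side because $\tilde M_{12},\tilde M_{22}$ ``can be inverted for $\partial_t p_k$ modulo lower order'' using $\det(b_{ij})\neq0$. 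That inversion is not modulo lower order: solving the pair $\tilde M_{12},\tilde M_{22}$ for $b_{11}\partial_t p_1+b_{12}\partial_t p_2$ and $b_{21}\partial_t p_1+b_{22}\partial_t p_2$ leaves behind $\Delta p_1$, $\Delta p_2$ (and the large terms $s^2|\nabla\varphi|^2p_k$), and the estimate \eqref{e6} controls only $s^3\lambda^4\theta^3|p_k|^2$ and $s\lambda^2\theta|\nabla p_k|^2$ --- no weighted $L^2$ bound on second spatial derivatives is available, and no negative power of $s$ repairs this. Using the PDE itself instead of the Carleman operators is circular, since $\Delta p_1=\tilde F_1-i(b_{11}\partial_t p_1+b_{12}\partial_t p_2)$ just trades $\Delta p$ back for $\partial_t p$. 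So as written the cross term $2\,\mathrm{Re}(\partial_t p_1\bar p_1)$ is not absorbed and the proof does not close.

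The paper's way around this is precisely to avoid splitting that cross term by Young's inequality: it pairs $\sigma^2\tilde M_{12}(u_1,u_2)\bar u_1+\tilde M_{22}(u_1,u_2)\bar u_2$ against the weight $e^{-2s\varphi}$, takes the imaginary part, and integrates by parts over $(0,T)\times\Omega$. Then $\Delta u_k\bar u_k$ becomes $-|\nabla u_k|^2$ plus controllable terms (so no second derivatives ever appear), the cross terms in $\partial_t$ combine into a total time derivative thanks to assumption A2 ($\sigma^2 b_{12}=b_{21}$ --- an ingredient your argument never uses), and the time boundary term at the midpoint produces exactly $\tfrac12\int_\Omega\sigma^2 b_{11}|u_1(x,T)|^2e^{-2s\varphi(x,T)}dx$ because $u_2(T)=0$; the remaining quantities are bounded via Cauchy--Schwarz with a prefactor $\lambda^{-2}s^{-3/2}$, which plays the role you assign to your Laplace-type estimate. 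If you replace your Young-inequality step by this weighted pairing and integration by parts (and add assumption A2 where the cross terms must cancel), your argument becomes essentially the paper's proof; your $s^{-1/2}$ Laplace bound is then an acceptable alternative to the paper's use of $e^{-2s\varphi(x,t)}\le e^{-2s\varphi(x,T)}$, though your appeal to $\|\partial_t\tilde y_1\|_{L^\infty(Q_T)}$ deserves a word of justification from the stated regularity.
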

\begin{proof}
For the sake of simplicity, we set $z_1=y_1(a)-\tilde{y}_1(\tilde{a})$, $z_2=y_2(a)-\tilde{y}_2(\tilde{a})$, thus $(z_1,z_2)$ satisfies
\begin{equation}\label{f2}
  \left\{
   \begin{array}{ll}
   \ds i\partial_t z_1+a_{11}\Delta z_1+a_{12}\Delta z_2+a(x)z_1+b(x)z_2=f(x)R(x,t),\quad &\textrm{in}\ \Omega\times(0,T),  \\
   \ns\ds i\partial_t z_2+a_{21}\Delta z_1+a_{22}\Delta z_2+c(x)z_1+d(x)z_2=0,\quad   &\textrm{in}\ \Omega\times(0,T),   \\
   \ns\ds z_1(x,t)=z_2(x,t)=0,\quad  &\textrm{on}\ \Gamma\times(0,T),  \\
   \ns\ds z_1(x,0)=z_2(x,0)=0,\quad&\textrm{in}\ \Omega,
   \end{array}
   \right.
  \end{equation}
with $f(x)=\tilde{a}-a$ and $R(x,t)=\tilde{y}_1(x,t)$.
Take the even-conjugate extension of $(z_1,z_2)$ to the interval $(-T,T)$, i.e., set
$$\left(z_1(x,t),z_2(x,t)\right)=\left(\overline{z_1(x,-t)},\overline{z_2(x,-t)}\right),\qq \textrm{for} \ t\in(-T,0).$$
If $R(x,0)\in \mathbb{R}$ for a.e. $x\in\Omega$, then we set
$$R(x,t)=\overline{R(x,-t)},\qq\textrm{for} \ t\in(-T,0).$$
If $iR(x,0)\in \mathbb{R}$ for a.e. $x\in\Omega$, then we set
$$R(x,t)=-\overline{R(x,-t)},\qq\textrm{for} \ t\in(-T,0).$$
Change $t$ into $t+T$, thus $(z_1,z_2)$ and $R(x,t)$ fulfill the system \eqref{f2} in $\Omega\times(0,2T)$. Let $(u_1,u_2)=(\partial_t z_1(x,2T-t),\partial_t z_2(x,2T-t))$, then by fundamental computation we have
\begin{equation}\label{f3}
  \left\{
   \begin{array}{ll}
   \ds i\partial_t u_1+a_{11}\Delta u_1+a_{12}\Delta u_2+a(x)u_1+b(x)u_2=f(x)R_t(x,t),\ &\textrm{in}\ \Omega\times(0,2T),  \\
   \ns\ds i\partial_t u_2+a_{21}\Delta u_1+a_{22}\Delta u_2+c(x)u_1+d(x)u_2=0,\   &\textrm{in}\ \Omega\times(0,2T),   \\
   \ns\ds u_1(x,t)=u_2(x,t)=0,\  &\textrm{on}\ \Gamma\times(0,2T),\\
   \ns\ds u_1(T)=-if(x)R(x,T),\ u_2(T)=0, \ &\textrm{in}\ \Omega.
   \end{array}
   \right.
  \end{equation}
Define
$$\theta(x,t)=\frac{e^{\lambda\psi(x)}}{t(2T-t)},\ \varphi(x,t)=\frac{e^{\lambda C_\psi}-e^{\lambda\psi(x)}}{t(2T-t)},\quad\quad\quad\forall (x,t)\in \Omega\times(0,2T).$$
Let $v_1=e^{-s\varphi}u_1$, $v_2=e^{-s\varphi}u_2$, recall that
\begin{equation}\label{f8}
\begin{cases}\ds
M_{12}(v_1,v_2)=i(b_{11}\partial_t v_1+b_{12}\partial_t v_2)+\Delta v_1+s^2|\nabla \varphi|^2v_1,\\
\ns\ds M_{22}(v_1,v_2)=i(b_{21}\partial_t v_1+b_{22}\partial_t v_2)+\Delta v_2+s^2|\nabla \varphi|^2v_2.
\end{cases}
\end{equation}
Then set
\begin{equation}
\begin{cases}\ds
\tilde{M}_{12}(u_1,u_2)=e^{s\varphi}M_{12}(v_1,v_2),\\
\ns\ds \tilde{M}_{22} (u_1,u_2)=e^{s\varphi}M_{22}(v_1,v_2),
\end{cases}
\end{equation}
and
\begin{equation}
\begin{cases}\ds
L_1  = \int_{0}^T\int_{\Omega}\sigma^2e^{-2s\varphi}\tilde{M}_{12}(u_1,u_2)\bar{u}_1dxdt,\\
\ns\ds L_2   = \int_{0}^T\int_{\Omega}e^{-2s\varphi}\tilde{M}_{22}(u_1,u_2)\bar{u}_2dxdt,
\end{cases}
\end{equation}
we can obtain
\begin{equation}
\begin{aligned}
\quad L_1  & = \int_{0}^T\int_{\Omega}\sigma^2e^{-2s\varphi}\tilde{M}_{12}(u_1,u_2)\bar{u}_1dxdt  \\
& = \int_{0}^T\int_{\Omega}\sigma^2M_{12}(v_1,v_2)\bar{v}_1dxdt\\
& = \int_{0}^T\int_{\Omega} \left(i(\sigma^2b_{11}\partial_tv_1\bar{v}_1+\sigma^2b_{12}\partial_tv_2\bar{v}_1)+\sigma^2(\Delta v_1+s^2|\nabla \varphi|^2v_1)\bar{v}_1\right)dxdt.
\end{aligned}
\end{equation}
\begin{equation}
\begin{aligned}
L_2  & = \int_{0}^T\int_{\Omega}e^{-2s\varphi}\tilde{M}_{22}(u_1,u_2)\bar{u}_2dxdt  \\
& = \int_{0}^T\int_{\Omega}M_{22}(v_1,v_2)\bar{v}_2dxdt\\
& = \int_{0}^T\int_{\Omega}\left(i(b_{21}\partial_tv_1\bar{v}_2+b_{22}\partial_tv_2\bar{v}_2)+(\Delta v_2+s^2|\nabla \varphi|^2v_2)\bar{v}_2\right)dxdt.\qq \
\end{aligned}
\end{equation}
By integrations by parts and $\textrm{H\"{o}lder's}$ inequality, we have
\begin{align}\nonumber\label{f4}
&\q\mathrm{Im}(L_1+L_2)\\\nonumber
 & = \frac{1}{2}\int_\Omega\sigma^2b_{11}|u_1(x,T)|^2e^{-2s\varphi(x,T)}dx - \mathrm{Im}\int_{0}^T\int_{\Omega}\nabla\sigma^2\cdot\nabla v_1 \bar{v}_1dxdt\\\nonumber
& = \frac{1}{2}\int_\Omega \sigma^2b_{11}e^{-2s\varphi(x,T)}|f(x)|^2|R(x,T)|^2dx - \mathrm{Im}\int_{0}^T\int_{\Omega}\nabla\sigma^2\cdot\nabla v_1 \bar{v}_1dxdt \\
& \geq C\int_\Omega e^{-2s\varphi(x,T)}|f(x)|^2dx-C\int_{0}^T\int_{\Omega}\left(s^{-\frac{1}{2}}|\nabla v_1|^2+s^{\frac{1}{2}}|v_1|^2\right)dxdt.
\end{align}
On the other hand,
\begin{align}\nonumber
|L_1|  & \leq \left(\int_{0}^T\int_{\Omega}\lambda^{-2}s^{-\frac{3}{2}}\sigma^2e^{-2s\varphi}|\tilde{M}_{12}|^2dxdt\right)^\frac{1}{2} \left(\int_{0}^T\int_{\Omega}\lambda^{2}s^{\frac{3}{2}}\sigma^2e^{-2s\varphi}|u_1 |^2dxdt\right)^\frac{1}{2}  \\\nonumber
& \leq \frac{1}{2}\left(\lambda^{-2}s^{-\frac{3}{2}}\int_{0}^T\int_{\Omega}\sigma^2e^{-2s\varphi}|\tilde{M}_{12}|^2dxdt
+\lambda^{2}s^{\frac{3}{2}}\int_{0}^T\int_{\Omega}\sigma^2e^{-2s\varphi}|u_1 |^2dxdt\right)  \\
&\leq C\lambda^{-2}s^{-\frac{3}{2}}\left(\int_{0}^T\int_{\Omega}e^{-2s\varphi}|\tilde{M}_{12}|^2dxdt+s^3\lambda^4\int_{0}^T\int_{\Omega}e^{-2s\varphi}|u_1 |^2dxdt\right).
\end{align}
By similar calculation to $L_2$ we have
\begin{align}\nonumber\label{f5}
|L_1+L_2|& \leq C\lambda^{-2}s^{-\frac{3}{2}}\left(\int_{0}^T\int_{\Omega}e^{-2s\varphi}(|\tilde{M}_{12}|^2+|\tilde{M}_{22}|^2)dxdt\right.\\
&\left. \qq\qq\qq+s^3\lambda^4\int_{0}^T\int_{\Omega}e^{-2s\varphi}(|u_1 |^2+|u_2|^2)dxdt\right).
\end{align}
From \eqref{f4} and \eqref{f5}, it follows that
\begin{align}\nonumber\label{f6}
&\q\int_\Omega e^{-2s\varphi(x,T)}|f(x)|^2dx\\\nonumber
& \leq C\lambda^{-2}s^{-\frac{3}{2}}\left(\int_{0}^T\int_{\Omega}e^{-2s\varphi}(|\tilde{M}_{12}|^2+|\tilde{M}_{22}|^2)dxdt+\int_{0}^T\int_{\Omega}s\lambda^2\theta e^{-2s\varphi}|\nabla u_1|^2dxdt\right.\\
&\left.\qq\qq\qq +s^3\lambda^4\int_{0}^T\int_{\Omega}\theta^3e^{-2s\varphi}(|u_1 |^2+|u_2|^2)dxdt\right).
\end{align}
Noting that $e^{-2s\varphi(x,t)}\leq e^{-2s\varphi(x,T)}$ for all $(x,t)\in\Omega\times(0,2T)$,
and apply Carleman inequality $\eqref{e6}$ with $2T$ instead of $t$ to \eqref{f6}, we obtain
\begin{align}\nonumber\label{f7}
&\q\int_\Omega e^{-2s\varphi(x,T)}|f(x)|^2dx\\\nonumber
& \leq C\lambda^{-2}s^{-\frac{3}{2}}\left(\int_{0}^{2T}\int_{\Omega}e^{-2s\varphi}|fR_t|^2dxdt+s^3\lambda^4\int_{0}^{2T}\int_{\omega}\theta^3e^{-2s\varphi}(|u_1 |^2+|u_2|^2)dxdt\right.\\\nonumber
&\left.\qq\qq\qq +\int_{0}^{2T}\int_{\omega}s\lambda^2\theta e^{-2s\varphi}(|\nabla u_1|^2+|\nabla u_2|^2)dxdt\right)\\\nonumber
&\leq C\lambda^{-2}s^{-\frac{3}{2}}\int_{\Omega}e^{-2s\varphi(x,T)}|f|^2dx+Cs^{-\frac{1}{2}}\int_{0}^{2T}\int_{\omega}(|\nabla u_1|^2+|\nabla u_2|^2)dxdt\\
&\q+Cs^{\frac{3}{2}}\lambda^2\int_{0}^{2T}\int_{\omega}(|u_1 |^2+|u_2|^2)dxdt.
\end{align}
Therefore, if $s$ and $\lambda$ sufficiently large, we deduce that
\begin{align}\nonumber
 \|f\|^2_{L^2(\Omega)}  & \leq C \left(\|u_1\|^2_{L^2(0,2T;H^1(\omega))}+\|u_2\|^2_{L^2(0,2T;H^1(\omega))}\right) \\
& \leq C\left(\|\partial_tz_1\|^2_{L^2(0,T;H^1(\omega))}+\|\partial_tz_2\|^2_{L^2(0,T;H^1(\omega))}\right).
\end{align}
That is
\begin{align}
\|\tilde{a}-a\|^2_{L^2(\Omega)}\leq C\left(\|y_1(\tilde{a})-y_1(a)\|^2_{H^1(0,T;H^1(\omega))}+\|y_2(\tilde{a})-y_2(a)\|^2_{H^1(0,T;H^1(\omega))}\right).
\end{align}
\end{proof}
\section{ IP (II): Boundary Observations}
 \subsection{Carleman estimate}
 Once more, let $\Omega\subset\mathbb{R}^N$ be a bounded domain with smooth boundary $\Gamma$, let $\Gamma^+$ be an open set of $\Gamma$. Suppose $\psi(x)\in C^4(\overline{\Omega})$ is a weight function which satisfied the following properties:
 \begin{itemize}
      \item  $\nabla\psi(x)\neq0,\quad\quad \textrm{in}\ \overline{\Omega}$.
      \item  $\frac{\partial\psi}{\partial \nu}\leq 0,\qquad \ \textrm{on}\ \Gamma\setminus\Gamma^+$.
      \item  $\frac{\partial\psi}{\partial \nu}> 0,\qquad \ \textrm{on}\ \Gamma^+$.
      \item  There exist a constant $\mu>0$, such that $\forall x\in \overline{\Omega}$ and $\forall \xi=(\xi_1,...,\xi_N)\in\mathbb{R}^N$,
      $$|\nabla\psi\cdot\xi|^2\geq\mu|\xi|^2.$$
      \item  $\psi(x)>\frac{3}{4}\|\psi\|_{L^\infty(\Omega)},  \quad\forall x\in\Omega$.
 \end{itemize}
Set $C_\psi=\frac{3}{2}\|\psi\|_{L^\infty(\Omega)}$, $\lambda>0$ and let
$$\theta(x,t)=\frac{e^{\lambda\psi(x)}}{t(T-t)},\ \varphi(x,t)=\frac{e^{\lambda C_\psi}-e^{\lambda\psi(x)}}{t(T-t)},\quad\quad\quad\forall (x,t)\in \Omega\times(0,T).$$
We prove here a Carleman estimate with a boundary observation acting on a subset $\Gamma^+$ of $\Gamma$. Consider the system \eqref{b1}, we have the following result.
\bp\label{D}
Suppose there exist $\psi$, $\varphi$ and $\theta$ satisfy the above conditions, let $f_1$, $f_2\in L^2(\overline{Q_T})$, and $(y_1,y_2)\in [H^1(0,T;L^2(\Omega))\cap L^2(0,T;H^2(\Omega))]^2$ is the solution of \eqref{b1}. Then there exist constants $s_0\geq1$, $\lambda_0\geq1$ and $C>0$ such that for all $s>s_0$, $\lambda>\lambda_0$, the next inequality holds:
\begin{align}\nonumber
&\q  \int_{Q_T}\left(|\tilde{M}_{11}(y_1 ,y_2)|^2+|\tilde{M}_{12}(y_1 ,y_2)|^2+| \tilde{M}_{21}(y_1 ,y_2)|^2+| \tilde{M}_{22}(y_1 ,y_2)|^2\right)e^{-2s\varphi}dxdt\\\nonumber
&\q  +\int_{Q_T}\left[s^3\lambda^4\theta^3\big(|y_1|^2+|y_2|^2\big)+s\lambda^2\theta\big(|\nabla y_1|^2+|\nabla y_2|^2\big)\right]e^{-2s\varphi}dxdt\\
&\leq C\int_{Q_T}\left(|f_1|^2+|f_2|^2\right)e^{-2s\varphi}dxdt+C\int_{\Gamma^+_T}s\lambda\theta\left|\frac{\partial\psi}{\partial \nu}\right|\left(\left|\frac{\partial y_1}{\partial \nu}\right|^2+\left|\frac{\partial y_2}{\partial \nu}\right|^2\right)e^{-2s\varphi}dSdt,
\end{align}
where $\tilde{M}_{11}$, $\tilde{M}_{12}$, $\tilde{M}_{21}$ and $\tilde{M}_{22}$ are defined in \eqref{h1} and \eqref{h2}.
\ep
\begin{proof}
 Using the fact that there exists a $\mu>0$, such that for all $x\in \overline{\Omega}$ and all $\xi=(\xi_1,...,\xi_N)\in\mathbb{R}^N$, $$|\nabla\psi\cdot\xi|^2+\sum_{i,j=1}^N(\partial_i\partial_j\psi(x))\xi_i\xi_j\geq\mu|\xi|^2,$$
 we have
\begin{align}\label{g1}\nonumber
\|\sigma G_1\|+\| G_2\|&\geq \|\sigma M_{11}(u ,v)\|+\|\sigma M_{12}(u ,v)\|+\| M_{21}(u ,v)\|+\| M_{22}(u ,v)\|\\\nonumber
&\q  +C\int_{Q_T}s^3\lambda^4\theta^3\big(|u|^2+|v|^2\big)dxdt+C\int_{Q_T}s\lambda^2\theta\big(|\nabla u|^2+|\nabla v|^2\big)dxdt \\\nonumber
&\q  - \int_{\Gamma^+_T}2s\lambda\theta\frac{\partial\psi}{\partial \nu}\left(\sigma^2\left|\frac{\partial u}{\partial \nu}\right|^2+\left|\frac{\partial v}{\partial \nu}\right|^2\right)dSdt\\
&\q-\varepsilon\int_{Q_T}\left(|f_1|^2+|f_2|^2\right)e^{-2s\varphi}dxdt,
\end{align}
for sufficiently large $s$ and $\lambda$. Here we have used \eqref{d1} and \eqref{e4} in the proof of Proposition 2.1. Noting that $\frac{\partial\psi}{\partial \nu}\leq 0$ on $\Gamma\setminus\Gamma^+$ and $\frac{\partial\psi}{\partial \nu}> 0$ on $\Gamma^+$, we obtain
\begin{align}\label{g2}\nonumber
 &\q\|\sigma M_{11}(u ,v)\|+\|\sigma M_{12}(u ,v)\|+\| M_{21}(u ,v)\|+\| M_{22}(u ,v)\|\\\nonumber
&\q  +C\int_{Q_T}s^3\lambda^4\theta^3\big(|u|^2+|v|^2\big)dxdt+C\int_{Q_T}s\lambda^2\theta\big(|\nabla u|^2+|\nabla v|^2\big)dxdt \\
&\leq \|\sigma G_1\|+\| G_2\|+ \int_{\Gamma^+_T}2s\lambda\theta\left|\frac{\partial\psi}{\partial \nu}\right|\left(\sigma^2\left|\frac{\partial u}{\partial \nu}\right|^2+\left|\frac{\partial v}{\partial \nu}\right|^2\right)dSdt.
\end{align}
Replacing $(u,v)$ by $(e^{-s\varphi}y_1,e^{-s\varphi}y_2)$ in \eqref{g2} yields
\begin{align}\label{p1}\nonumber
&\q  \int_{Q_T}\left(| \tilde{M}_{11}(y_1 ,y_2)|^2+|\tilde{M}_{12}(y_1 ,y_2)|^2+| \tilde{M}_{21}(y_1 ,y_2)|^2+| \tilde{M}_{22}(y_1 ,y_2)|^2\right)e^{-2s\varphi}dxdt\\\nonumber
&\q  +\int_{Q_T}\left[s^3\lambda^4\theta^3\big(|y_1|^2+|y_2|^2\big)+s\lambda^2\theta\big(|\nabla y_1|^2+|\nabla y_2|^2\big)\right]e^{-2s\varphi}dxdt\\
&\leq C\int_{Q_T}\left(|f_1|^2+|f_2|^2\right)e^{-2s\varphi}dxdt+C\int_{\Gamma^+_T}s\lambda\theta\left|\frac{\partial\psi}{\partial \nu}\right|\left(\left|\frac{\partial y_1}{\partial \nu}\right|^2+\left|\frac{\partial y_2}{\partial \nu}\right|^2\right)e^{-2s\varphi}dSdt.
\end{align}
\end{proof}
The following Carleman estimate is a direct consequence of the proposition \ref{D}. Consider the system \eqref{a1}, we have the following result.
\bt\label{E}
Let $\psi$, $\varphi$ and $\theta$ be as in proposition \ref{D}, let $f_1$, $f_2\in L^2(\overline{Q_T})$, and $(y_1,y_2)\in [H^1(0,T;L^2(\Omega))\cap L^2(0,T;H^2(\Omega))]^2$ is the solution of \eqref{a1} with $g_1(x,t)=g_2(x,t)=0$, $a_{ij}\in C^2(\overline{\Omega})$ and satisfy \eqref{ap}.
Then there exist constants $s_0\geq1$, $\lambda_0\geq1$ and $C>0$ such that for all $s>s_0$, $\lambda>\lambda_0$, the next inequality holds:
\begin{align}\label{p11}\nonumber
&\q  \int_{Q_T}\left(| \tilde{M}_{11}(y_1 ,y_2)|^2+|\tilde{M}_{12}(y_1 ,y_2)|^2+| \tilde{M}_{21}(y_1 ,y_2)|^2+| \tilde{M}_{22}(y_1 ,y_2)|^2\right)e^{-2s\varphi}dxdt\\\nonumber
&\q  +\int_{Q_T}\left[s^3\lambda^4\theta^3\big(|y_1|^2+|y_2|^2\big)+s\lambda^2\theta\big(|\nabla y_1|^2+|\nabla y_2|^2\big)\right]e^{-2s\varphi}dxdt\\
&\leq C\int_{Q_T}\left(|f_1|^2+|f_2|^2\right)e^{-2s\varphi}dxdt+C\int_{\Gamma^+_T}s\lambda\theta\left(\left|\frac{\partial y_1}{\partial \nu}\right|^2+\left|\frac{\partial y_2}{\partial \nu}\right|^2\right)e^{-2s\varphi}dSdt.
\end{align}
\et
\subsection{IP (II)}
Based on Carleman inequality \eqref{p11}, we can obtain the following stability result for IP (II).
\begin{theorem}\label{E}
Let $(y_1,y_2)\in [H^1(0,T;L^2(\Omega))\cap L^2(0,T;H^2(\Omega))]^2$ is the solution of \eqref{a1}, $\frac{\partial \tilde{y}_1}{\partial\nu}-\frac{\partial y_1}{\partial\nu},\ \frac{\partial \tilde{y}_2}{\partial\nu}-\frac{\partial y_2}{\partial\nu}\in H^1(0,T;L^2(\Gamma^+))$, and the assumptions $A1-A5$ hold. Then there exists a constant $C>0$ such that
\begin{align}\nonumber
\|\tilde{a}-a\|^2_{L^2(\Omega)}\leq C\left(\|\frac{\partial \tilde{y}_1}{\partial\nu}-\frac{\partial y_1}{\partial\nu}\|^2_{H^1(0,T;L^2(\Gamma^+))}+\parallel\frac{\partial \tilde{y}_2}{\partial\nu}-\frac{\partial y_2}{\partial\nu}\parallel^2_{H^1(0,T;L^2(\Gamma^+))}\right),
\end{align}
where $(y_1,y_2)$, $(\tilde{y}_1,\tilde{y}_2)$ are solutions of \eqref{a1} corresponding to $a$ and $\tilde{a}$, respectively.
\end{theorem}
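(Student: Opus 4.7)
The proof will follow the same strategy as Theorem \ref{C}, replacing the interior Carleman estimate \eqref{e6} with the boundary Carleman estimate \eqref{p11}. First I set $z_1=y_1(a)-\tilde{y}_1(\tilde{a})$, $z_2=y_2(a)-\tilde{y}_2(\tilde{a})$, which satisfies a system of the form \eqref{f2} with source $f(x)R(x,t)$ where $f=\tilde{a}-a$ and $R=\tilde{y}_1$. Using assumption $A4$ I extend $(z_1,z_2)$ and $R$ to $(-T,T)$ by the (anti)conjugate reflection (the sign choice depends on whether $y_{10}$ or $iy_{10}$ is real), so that the extended pair still solves the system on $\Omega\times(-T,T)$; after shifting $t\mapsto t+T$, the extended system lives on $\Omega\times(0,2T)$ with the singularity of $\varphi$ concentrated at $t=0$ and $t=2T$.

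Next I differentiate in time and set $u_1(x,t)=\partial_t z_1(x,2T-t)$, $u_2(x,t)=\partial_t z_2(x,2T-t)$. The pair $(u_1,u_2)$ solves a Schr\"odinger system with source $f(x)R_t(x,t)$ and, crucially, with data at $t=T$ given by $u_1(T)=-if(x)R(x,T)$ and $u_2(T)=0$. Following exactly the multiplier argument in Theorem \ref{C} (forming $L_1,L_2$ by testing $\sigma^2 e^{-2s\varphi}\tilde M_{12}$ against $\bar u_1$ and $e^{-2s\varphi}\tilde M_{22}$ against $\bar u_2$, then taking imaginary parts), assumption $A5$ together with $b_{11}>0$ yields the lower bound
\begin{equation}\nonumber
\mathrm{Im}(L_1+L_2)\geq C\int_\Omega e^{-2s\varphi(x,T)}|f(x)|^2\,dx-C\int_0^T\!\!\int_\Omega\big(s^{-\frac12}|\nabla v_1|^2+s^{\frac12}|v_1|^2\big)dxdt,
\end{equation}
while Cauchy--Schwarz and H\"older's inequality give the upper bound
\begin{equation}\nonumber
|L_1+L_2|\leq C\lambda^{-2}s^{-\frac32}\Big(\int_0^T\!\!\int_\Omega e^{-2s\varphi}(|\tilde M_{12}|^2+|\tilde M_{22}|^2)dxdt+s^3\lambda^4\int_0^T\!\!\int_\Omega e^{-2s\varphi}(|u_1|^2+|u_2|^2)dxdt\Big).
\end{equation}

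The decisive step is to estimate the right-hand side using the boundary Carleman inequality \eqref{p11} applied to $(u_1,u_2)$ on $\Omega\times(0,2T)$ with the weight adjusted to $t(2T-t)$. Since $e^{-2s\varphi(x,t)}\leq e^{-2s\varphi(x,T)}$ on $\Omega\times(0,2T)$, this gives
\begin{equation}\nonumber
\int_\Omega e^{-2s\varphi(x,T)}|f|^2dx\leq C\lambda^{-2}s^{-\frac32}\int_0^{2T}\!\!\int_\Omega e^{-2s\varphi}|fR_t|^2dxdt+C s^{\frac12}\lambda^{-1}\int_0^{2T}\!\!\int_{\Gamma^+}\theta\Big(\Big|\frac{\partial u_1}{\partial\nu}\Big|^2+\Big|\frac{\partial u_2}{\partial\nu}\Big|^2\Big)e^{-2s\varphi}dSdt.
\end{equation}
Because $R_t\in L^\infty$ (thanks to the regularity $(\tilde y_1,\tilde y_2)\in [H^2(0,T;L^2)\cap H^1(0,T;H^2)]^2$), the first term on the right is bounded by $C\lambda^{-2}s^{-\frac32}\int_\Omega e^{-2s\varphi(x,T)}|f|^2dx$, which is absorbed into the left-hand side for $s$ and $\lambda$ large enough. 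After unfolding the time reflection back to $\partial_t z_i$ on $(0,T)$ and using $z_i=y_i(a)-\tilde y_i(\tilde a)$, I obtain
\begin{equation}\nonumber
\|\tilde a-a\|_{L^2(\Omega)}^2\leq C\Big(\Big\|\tfrac{\partial\tilde y_1}{\partial\nu}-\tfrac{\partial y_1}{\partial\nu}\Big\|_{H^1(0,T;L^2(\Gamma^+))}^2+\Big\|\tfrac{\partial\tilde y_2}{\partial\nu}-\tfrac{\partial y_2}{\partial\nu}\Big\|_{H^1(0,T;L^2(\Gamma^+))}^2\Big).
\end{equation}

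The main obstacle I expect is the careful bookkeeping needed to make sure that after the even/odd extension, $(z_1,z_2)$ and $R$ still solve the same Schr\"odinger system across $t=0$, and that the regularity required for the time-differentiated system $(u_1,u_2)$ is available from the hypothesis $(y_1,y_2)\in [H^2(0,T;L^2)\cap H^1(0,T;H^2)]^2$ (this is implicit in the statement, as in Theorem \ref{C}). All other steps reduce to repeating the computations of Theorem \ref{C} with the interior observation terms on $\omega_T$ replaced by the corresponding boundary observation terms on $\Gamma^+_T$ supplied by \eqref{p11}.
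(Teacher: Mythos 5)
Your proposal is correct and follows essentially the same route as the paper: reduce to the difference system, extend in time via the conjugate reflection allowed by A4, differentiate and reverse time to get $(u_1,u_2)$ with $u_1(T)=-if R(x,T)$, reuse the multiplier identities $L_1,L_2$ from the internal-observation case, and close the estimate by invoking the boundary Carleman inequality \eqref{p11} instead of \eqref{e6}, absorbing the $fR_t$ term for large $s,\lambda$. This is exactly the paper's argument, which simply notes that the intermediate estimates of Theorem \ref{C} remain valid and replaces the $\omega_T$ observation terms with the $\Gamma^+_T$ boundary terms.
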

\begin{proof}
As in the proof of theorem \ref{B}, let $z_1=y_1(a)-\tilde{y}_1(\tilde{a})$, $z_2=y_2(a)-\tilde{y}_2(\tilde{a})$, $f(x)=\tilde{a}-a$ and $R(x,t)=\tilde{y}_1(x,t)$, thus $(z_1,z_2)$ satisfies \eqref{f2}. Then extend the functions $(z_1,z_2)$ on $\Omega\times(-T,T)$ by the formula $\left(z_1(x,t),z_2(x,t)\right)=\left(\overline{z_1(x,-t)},\overline{z_2(x,-t)}\right)$ for $t\in(-T,0)$, and extend $R(x,t)$ by the formula $R(x,t)=\overline{R(x,-t)}$ for $t\in(-T,0)$ if $R(x,0)\in\mathbb{R}$, or extend $R(x,t)$ by the formula $R(x,t)=-\overline{R(x,-t)}$ for $t\in(-T,0)$ if $iR(x,0)\in\mathbb{R}$. Changing $t$ into $t+T$, we consider $(z_1,z_2)$ and $R$ defined on $\Omega\times(0,2T)$, and then we put $(u_1,u_2)=(\partial_t z_1(x,2T-t),\partial_t z_2(x,2T-t))$ so that \eqref{f3} holds, and we can notice that $\eqref{f8}-\eqref{f6}$ are still valid in the current case of boundary observations. By the boundary Carleman inequality \eqref{p11}, we obtain
\begin{align}\nonumber\label{p2}
&\q\int_\Omega e^{-2s\varphi(x,T)}|f(x)|^2dx\\\nonumber
& \leq C\lambda^{-2}s^{-\frac{3}{2}}\left[\int_{0}^{2T}\int_{\Omega}e^{-2s\varphi}|fR_t|^2dxdt+\int_{0}^{2T}\int_{\Gamma^+}s\lambda\theta\left(\left|\frac{\partial u_1}{\partial \nu}\right|^2+\left|\frac{\partial u_2}{\partial \nu}\right|^2\right)e^{-2s\varphi}dSdt    \right]\\
&\leq C\lambda^{-2}s^{-\frac{3}{2}}\int_{\Omega}e^{-2s\varphi(x,T)}|f|^2dx+C\int_{0}^{2T}\int_{\Gamma^+}s^{-\frac{1}{2}}\lambda^{-1}\left(\left|\frac{\partial u_1}{\partial \nu}\right|^2+\left|\frac{\partial u_2}{\partial \nu}\right|^2\right)dSdt.
\end{align}
Therefore, for $s$ and $\lambda$ sufficiently large, we get
\begin{align}\label{p3}
\int_\Omega e^{-2s\varphi(x,T)}|f(x)|^2dx\leq C\int_{0}^{2T}\int_{\Gamma^+}\left(\left|\frac{\partial u_1}{\partial \nu}\right|^2+\left|\frac{\partial u_2}{\partial \nu}\right|^2\right)dSdt.
\end{align}
That is
\begin{align}
\|\tilde{a}-a\|^2_{L^2(\Omega)}\leq C\left(\|\frac{\partial \tilde{y}_1}{\partial\nu}-\frac{\partial y_1}{\partial\nu}\|^2_{H^1(0,T;L^2(\Gamma^+))}+\parallel\frac{\partial \tilde{y}_2}{\partial\nu}-\frac{\partial y_2}{\partial\nu}\parallel^2_{H^1(0,T;L^2(\Gamma^+))}\right).
\end{align}
The proof is completed.
\end{proof}
\section*{Acknowledgement}
The second author thanks the support of the NSFC
(No. 12071061,11971093), the Fundamental
Research Funds for the Central Universities (No.
ZYGX2019J094) and the Science  Strength
Promotion Programme of UESTC.


\begin{thebibliography}{10}

\bibitem{BA1993}
A.~D. Bandrauk.
\newblock {\em Molecules in Laser Fields}.
\newblock M. Dekker, 1994.

\bibitem{VI1998}
V.~Isakov.
\newblock {\em Inverse Problems for Partial Differential Equations}.
\newblock Berlin: Springer, 1998.

\bibitem{BK1981}
A.~L.~Bukhgeim and M.~V.~Klibanov. \it Global uniqueness of class of multidimensional inverse problems. \sl Sov. Math. Dokl. \rm {\bf 24}(1981),244-247.

\bibitem{O2008}
A.~Mercado, A.~Osses and L.~Rosier. \it Inverse problems for the Schr\"{o}dinger equation via Carleman inequalities with degenerate weights. \sl Inverse Problems \rm{\bf 24}(2008),015017.

\bibitem{BC2009}
 A.~Benabdallah, M.~Cristofol, P.~Gaitan. and M.~Yamamoto. \it Inverse problem for a parabolic system with two components by measurements of one component. \sl Appl. Anal. \rm{\bf 88}(2009), 683-709.

 \bibitem{WY2018}
B.~Wu and J.~Yu. \it H\"{o}lder stability of an inverse problem for a strongly coupled reaction-diffusion system. \sl IMA J. Appl. Math. \rm{\bf 2}(2018), 424-444.

\bibitem{KJ2007}
C.~Kenig, J.~Sjoestrand and G.~Uhlmann. \it The Calder\'{o}n problem with partial data. \sl Ann. Math. \rm{\bf 165}(2007),567-591.

\bibitem{TD1997}
D.~Tataru. \it Carleman estimates, unique continuation and controllability for anisotropic PDE's. \sl Contemp. Math. \rm{\bf 209}(1997).

\bibitem{DY2019}
F. Dou and M. Yamamoto.  \it Logarithmic stability for a coefficient inverse problem of coupled Schr\"{o}dinger equations. \sl Inverse Problems \rm{\bf 35}(2019), 075006.

\bibitem{BP2007}
L.~Baudouin and J.~P. Puel. \it Uniqueness and
stability in an inverse problem for the
Schr\"{o}dinger equation. \sl Inverse Problems
\rm {\bf 18}(2007),1537.

\bibitem{BM2008}
L.~Baudouin and A.~Mercado. \it An inverse problem for Schr\"{o}dinger euqations with discontinous main coefficien. \sl Appl. Anal. \rm{\bf 87}(2008), 1145-1165.


\bibitem{LB2009}
 L.~Rosier and B.~Zhang. \it Null controllability of the complex Ginzburg-Landau equation. \sl Ann. Inst. H. Poincare Anal. \rm{\bf 26}(2009), 649-673.


\bibitem{KB1992}
M.~V.~Klibanov. \it Inverse problems and Carleman estimates. \sl Inverse Problems \rm {\bf 8}(1992),575-596.

\bibitem{CP2006}
 M.~Cristofol, P.~Gaitan and H.~Ramoul. \it Inverse problems for a $2\times2$ reaction-diffusion system using a Carleman estimate with one observation. \sl Inverse Problems \rm{\bf 22}(2006), 1561-1573.

\bibitem{BM2009}
 M.~Bellassoued and M.~Choulli. \it Logarithmic stability in the dynamical inverse problem for the Schr\"{o}dinger equation by arbitrary boundary observation. \sl J. Math. Pures Appl. \rm{\bf 91}(2009), 233-255.

\bibitem{BY2013}
M.~Bellassoued and M.~Yamamoto. \it Carleman Estimate and Inverse Source Problem for Biot's Equations Describing Wave Propagation in Porous Media. \sl Inverse Problems \rm{\bf 29}(2013), 115002.

\bibitem{KB2013}
M.~V.~Klibanov. \it Carleman estimates for global uniqueness, stability and numerical methods for coefficient inverse problems. \sl J. Inverse Ill-Posed Probl. \rm {\bf 21}(2013),477-560.

\bibitem{LT2011}
 S.~Liu and R.~Triggiani. \it Global uniqueness in determining electric potentials for a system of strongly coupled Schr\"{o}dinger equations with magnetic potential terms . \sl J. Inverse Ill-Posed Probl. \rm{\bf 19}(2011), 223-254.

\bibitem{CT1939}
T.~Carleman. \it Sur un probl\`{e}me d'unicit\'{e} pur les syst\`{e}mes d'\'{e}quations aux d\'{e}riv\'{e}es partielles \`{a} deux variables ind\'{e}pendantes. \sl Ark. Mat. Astr. Fys. \rm {\bf 26}(1939), 1-9.



















\end{thebibliography}
\end{document}